\theoremstyle{plain}
\newtheorem{theorem}{Theorem}[section]
\newtheorem{proposition}[theorem]{Proposition}
\newtheorem{lemma}[theorem]{Lemma}
\newtheorem{corollary}[theorem]{Corollary}
\theoremstyle{definition}
\newtheorem{definition}[theorem]{Definition}
\newtheorem{remark}[theorem]{Remark}
\newtheorem{convention}[theorem]{Convention}
\newcommand{\nc}{\newcommand}
\nc{\on}{\operatorname}
\nc{\Q}{\mathbb{Q}}
\nc{\Z}{\mathbb{Z}}
\nc{\cl}{\mathrm{cl}}
\nc{\fraka}{{\mathfrak a}} \nc{\bba}{{\mathbf a}}
\nc{\frakb}{{\mathfrak b}}
\nc{\frakc}{{\mathfrak c}}
\nc{\frakd}{{\mathfrak d}}
\nc{\frake}{{\mathfrak e}}
\nc{\frakf}{{\mathfrak f}}
\nc{\frakg}{{\mathfrak g}}
\nc{\frakh}{{\mathfrak h}}
\nc{\fraki}{{\mathfrak i}}
\nc{\frakj}{{\mathfrak j}}
\nc{\frakk}{{\mathfrak k}}
\nc{\frakl}{{\mathfrak l}}
\nc{\frakm}{{\mathfrak m}}
\nc{\frakn}{{\mathfrak n}}
\nc{\frako}{{\mathfrak o}}
\nc{\frakp}{{\mathfrak p}}
\nc{\frakq}{{\mathfrak q}}
\nc{\frakr}{{\mathfrak r}}
\nc{\fraks}{{\mathfrak s}}
\nc{\frakt}{{\mathfrak t}}
\nc{\fraku}{{\mathfrak u}}
\nc{\frakv}{{\mathfrak v}}
\nc{\frakw}{{\mathfrak w}}
\nc{\frakx}{{\mathfrak x}}
\nc{\fraky}{{\mathfrak y}}
\nc{\frakz}{{\mathfrak z}}
\nc{\frakA}{{\mathfrak A}}
\nc{\frakB}{{\mathfrak B}}
\nc{\frakC}{{\mathfrak C}}
\nc{\frakD}{{\mathfrak D}}
\nc{\frakE}{{\mathfrak E}}
\nc{\frakF}{{\mathfrak F}}
\nc{\frakG}{{\mathfrak G}}
\nc{\frakH}{{\mathfrak H}}
\nc{\frakI}{{\mathfrak I}}
\nc{\frakJ}{{\mathfrak J}}
\nc{\frakK}{{\mathfrak K}}
\nc{\frakL}{{\mathfrak L}}
\nc{\frakM}{{\mathfrak M}}
\nc{\frakN}{{\mathfrak N}}
\nc{\frakO}{{\mathfrak O}}
\nc{\frakP}{{\mathfrak P}}
\nc{\frakQ}{{\mathfrak Q}}
\nc{\frakR}{{\mathfrak R}}
\nc{\frakS}{{\mathfrak S}}
\nc{\frakT}{{\mathfrak T}}
\nc{\frakU}{{\mathfrak U}}
\nc{\frakV}{{\mathfrak V}}
\nc{\frakW}{{\mathfrak W}}
\nc{\frakX}{{\mathfrak X}}
\nc{\frakY}{{\mathfrak Y}}
\nc{\frakZ}{{\mathfrak Z}}
\nc{\bbA}{{\mathbb A}}
\nc{\bbB}{{\mathbb B}}
\nc{\bbC}{{\mathbb C}}
\nc{\bbD}{{\mathbb D}}
\nc{\bbE}{{\mathbb E}}
\nc{\bbF}{{\mathbb F}} \nc{\bbf}{{\mathbf f}}
\nc{\bbG}{{\mathbb G}}
\nc{\bbH}{{\mathbb H}}
\nc{\bbI}{{\mathbb I}}
\nc{\bbJ}{{\mathbb J}}
\nc{\bbK}{{\mathbb K}}
\nc{\bbL}{{\mathbb L}}
\nc{\bbM}{{\mathbb M}}
\nc{\bbN}{{\mathbb N}}
\nc{\bbO}{{\mathbb O}}
\nc{\bbP}{{\mathbb P}}
\nc{\bbQ}{{\mathbb Q}}
\nc{\bbR}{{\mathbb R}}
\nc{\bbS}{{\mathbb S}}
\nc{\bbT}{{\mathbb T}}
\nc{\bbU}{{\mathbb U}}
\nc{\bbV}{{\mathbb V}}
\nc{\bbW}{{\mathbb W}}
\nc{\bbX}{{\mathbb X}}
\nc{\bbY}{{\mathbb Y}}
\nc{\bbZ}{{\mathbb Z}}
\nc{\calA}{{\mathcal A}}
\nc{\calB}{{\mathcal B}}
\nc{\calC}{{\mathcal C}}
\nc{\calD}{{\mathcal D}}
\nc{\calE}{{\mathcal E}}
\nc{\calF}{{\mathcal F}}
\nc{\calG}{{\mathcal G}}
\nc{\calH}{{\mathcal H}}
\nc{\calI}{{\mathcal I}}
\nc{\calJ}{{\mathcal J}}
\nc{\calK}{{\mathcal K}}
\nc{\calL}{{\mathcal L}}
\nc{\calM}{{\mathcal M}}
\nc{\calN}{{\mathcal N}}
\nc{\calO}{{\mathcal O}}
\nc{\calP}{{\mathcal P}}
\nc{\calQ}{{\mathcal Q}}
\nc{\calR}{{\mathcal R}}
\nc{\calS}{{\mathcal S}}
\nc{\calT}{{\mathcal T}}
\nc{\calU}{{\mathcal U}}
\nc{\calV}{{\mathcal V}}
\nc{\calW}{{\mathcal W}}
\nc{\calX}{{\mathcal X}}
\nc{\calY}{{\mathcal Y}}
\nc{\calZ}{{\mathcal Z}}
\nc{\scrA}{{\mathscr A}}
\nc{\scrB}{{\mathscr B}}
\nc{\scrC}{{\mathscr C}}
\nc{\scrD}{{\mathscr D}}
\nc{\scrE}{{\mathscr E}}
\nc{\scrF}{{\mathscr F}}
\nc{\scrG}{{\mathscr G}}
\nc{\scrH}{{\mathscr H}}
\nc{\scrI}{{\mathscr J}}
\nc{\scrJ}{{\mathscr I}}
\nc{\scrK}{{\mathscr K}}
\nc{\scrL}{{\mathscr L}}
\nc{\scrM}{{\mathscr M}}
\nc{\scrN}{{\mathscr N}}
\nc{\scrO}{{\mathscr O}}
\nc{\scrP}{{\mathscr P}}
\nc{\scrQ}{{\mathscr Q}}
\nc{\scrR}{{\mathscr R}}
\nc{\D}{{\on{D}}}
\nc{\Div}{{\on{Div}}}
\nc{\Perv}{{\on{Perv}}}
\nc{\bnu}{{\bar{ \nu}}}
\nc{\olO}{\bar{\calO}}
\nc{\al}{{\alpha}} 
\nc{\be}{{\beta}}
\nc{\ga}{{\gamma}} \nc{\Ga}{{\Gamma}}
\nc{\hGa}{\hat{\Gamma}}
\nc{\ve}{{\varepsilon}} 
\nc{\la}{{\lambda}} \nc{\La}{{\Lambda}}
\nc{\om}{\omega} \nc{\Om}{\Omega} 
\nc{\sig}{{\sigma}} \nc{\Sig}{{\Sigma}}
\nc{\dR}{{\mathrm{dR}}}
\nc{\Perf}{{\mathrm{Perf}}}
\nc{\Gm}{{\mathbb{G}_m}}
\nc{\colim}{{\on{colim}}}
\nc{\et}{\mathrm{\acute{e}t}}
\DeclareMathAlphabet{\rhomalpha}{LS1}{stixscr}{m}{n}
\nc{\Spa}{\on{{Spa}}}
\nc{\Spd}{\on{{Spd}}}
\nc{\tnb}{\psi_{\rm tame}}
\nc{\oM}{\overline{{M}}}
\nc{\op}{{\on{op}}}
\nc{\ad}{{\on{ad}}}
\nc{\alg}{{\on{alg}}}
\nc{\Ad}{{\on{Ad}}}
\nc{\Adm}{{\on{Adm}}} \nc{\aff}{{\on{af}}}
\nc{\Aut}{{\on{Aut}}}
\nc{\Bun}{{\on{Bun}}}
\nc{\cha}{{\on{char}}}
\nc{\der}{{\on{der}}}
\nc{\Der}{{\on{Der}}}
\nc{\diag}{{\on{diag}}}
\nc{\End}{{\on{End}}}
\nc{\Fl}{{\calF\!\ell}}
\nc{\Tr}{{\on{Transp}}}
\nc{\TR}{{\calT\!\calR}}
\nc{\Gal}{{\on{Gal}}}
\nc{\Gr}{{\on{Gr}}}
\nc{\Hk}{{\on{Hk}}}
\nc{\rH}{{\on{H}}}
\nc{\Hom}{{\on{Hom}}}
\nc{\IC}{{\on{IC}}}
\nc{\id}{{\on{id}}}
\nc{\Id}{{\on{Id}}}
\nc{\ind}{{\on{ind}}}
\nc{\Ind}{{\on{Ind}}}
\nc{\Lie}{{\on{Lie}}}
\nc{\Pic}{{\on{Pic}}}
\nc{\pr}{{\on{pr}}}
\nc{\Res}{{\on{Res}}}
\nc{\res}{{\on{res}}} \nc{\Sat}{{\on{Sat}}}
\nc{\spc}{{\on{sc}}}
\nc{\drv}{{\on{der}}}
\nc{\sgn}{{\on{sgn}}}
\nc{\Spec}{{\on{Spec}}}\nc{\Spf}{\on{Spf}} 
\nc{\Sph}{\on{Sph}}
\nc{\St}{{\on{St}}}
\nc{\tr}{{\on{tr}}}
\nc{\Mod}{{\mathrm{-Mod}}}
\nc{\Hilb}{{\on{Hilb}}} 
\nc{\Ext}{{\on{Ext}}} 
\nc{\vs}{{\on{Vec}}}
\nc{\ev}{{\on{ev}}}
\nc{\nO}{{\breve{\calO}}}
\nc{\tS}{{\tilde{S}}}
\nc{\spe}{{\on{sp}}}
\nc{\loc}{{\on{loc}}}
\nc{\pre}{{\on{pre}}}
\nc{\dimt}{{\on{dim.trg}}}
\nc{\co}{\colon}
\nc{\dia}{{\diamondsuit}}
\nc{\nscrR}{{\mathscr{R}^{\on{nr}}}}
\nc{\GL}{{\on{GL}}}
\nc{\Gl}{\on{Gl}} 
\nc{\GSp}{{\on{GSp}}}
\nc{\gl}{{\frakg\frakl}}
\nc{\SL}{{\on{SL}}} 
\nc{\SU}{{\on{SU}}} 
\nc{\SO}{{\on{SO}}}
\nc{\PGL}{{\on{PGL}}}
\nc{\Conv}{{\on{Conv}}}
\nc{\Rep}{{\on{Rep}}}
\nc{\Dom}{{\on{Dom}}}
\nc{\red}{{\on{red}}}
\nc{\act}{{\on{act}}}
\nc{\nr}{{\on{nr}}}
\nc{\ctf}{{\on{ctf}}}
\nc{\str}{{\on{-}}} 
\nc{\os}{{\bar{s}}}
\nc{\oeta}{{\bar{\eta}}}
\nc{\hookto}{\hookrightarrow}
\nc{\longto}{\longrightarrow}
\nc{\leftto}{\leftarrow}
\nc{\onto}{\twoheadrightarrow}
\nc{\lonto}{\twoheadleftarrow}
\nc{\pot}[1]{ [\hspace{-0,5mm}[ {#1} ]\hspace{-0,5mm}] }
\nc{\rpot}[1]{ (\hspace{-0,7mm}( {#1} )\hspace{-0,7mm}) }
\nc{\smallpot}{{ <\hspace{-1,0mm}<}}
\numberwithin{equation}{section}
\begin{document}
	
	\title{On the connectedness of $p$-adic period domains.}
	
	\author[I. Gleason, J. Louren\c{c}o]{Ian Gleason, Jo\~ao Louren\c{c}o}

	\address{Mathematisches Institut der Universit\"at Bonn, Endenicher Allee 60, Bonn, Germany}
	\email{igleason@uni-bonn.de}

	\address{Mathematisches Institut, Universität Münster, Einsteinstrasse 62, Münster, Germany}
	\email{j.lourenco@uni-muenster.de}

	\begin{abstract}
		We prove that all $p$-adic period domains (and their non-minuscule analogues) are geometrically connected.
		This answers a question of Hartl \cite{Har13} and has consequences to the geometry of Shimura and local Shimura varieties.
	\end{abstract}

	\maketitle
	\tableofcontents
	
	\section{Introduction}
	Period domains and their geometric properties are recurring themes in analytic geometry when studying Shimura varieties and their $p$-adic uniformization. 
	They are analytic open subsets of flag varieties of reductive groups arising as the image of the Grothendieck--Messing period morphism, which stems from the theory of $p$-divisible groups. 
	The first instance of period domains in the literature is due to Drinfeld \cite{Drinf76}, who introduced the Drinfeld upper half-space $\Omega_n$, and was later complemented by Gross--Hopkins \cite{Hopk94}, who treated the period morphism for the Lubin--Tate tower.
    However, the first rigorous definition of $p$-adic period domains in terms of weakly admissible and admissible loci was given in the seminal book of Rapoport--Zink \cite{RZ96}, which initiated their systematic study. Since then, additional significant contributions to the field include the works of Hartl \cite{Hartl08}, Rapoport--Viehmann \cite{RV14}, Scholze--Weinstein \cite{SW13, SW20}, and Chen--Fargues--Shen \cite{MFS21}. We also refer to the book of Dat--Orlik--Rapoport \cite{DOR10} for a detailed introduction to the subject replete with examples.
    
	The purpose of this article is to prove that $p$-adic period domains are geometrically connected. Our results answers a conjecture of Hartl, see \cite[Conjecture 6.5]{Har13}.
	It is also a key ingredient for $p$-adic uniformization of Newton strata on Shimura varieties. 
	For a long time, it was common in the literature to assume that the coweight $\mu$ bounding the $p$-adic shtukas is minuscule, see for instance Rapoport--Zink \cite{RZ96} or Rapoport--Viehmann \cite{RV14}, because otherwise there was no hope of obtaining a moduli space representable by rigid-analytic spaces. We do not make any such assumption in this paper, since we can work entirely within the theory of diamonds \cite{Sch17}, following Scholze--Weinstein \cite{SW20}.  \\

	We consider a $p$-adic shtuka datum $(G,b,\mu)$ in the sense of Rapoport--Viehmann \cite[Definition 5.1]{RV14} but dropping the minuscule assumption on $\mu$, compare with \cite[Definition 23.1.1]{SW20}. This consists of a reductive group $G$ over $\bbQ_p$, an element $b$ of the Kottwitz set $B(G)=G(\breve \bbQ_p)/\mathrm{ad}_{\varphi}(G(\breve \bbQ_p))$ in the sense of Kottwitz \cite{Kot85}, and a geometric conjugacy class of (not necessarily minuscule) cocharacters $\mu \in \mathrm{Hom}(\bbG_m,G_{\bar{\bbQ}_p})/\mathrm{ad}(G(\bar \bbQ_p))$, such that $b\in B(G,\mu)$. 
	Let $E$ over $\bbQ_p$ be the reflex field of $\mu$, i.e.~ the finite field extension over which the conjugacy class of $\mu$ is defined.
	We let $\bbC_p$ denote a completed algebraic closure of $\bbQ_p$, $\breve{E}\subseteq \bbC_p$ denote the compositum of $E$ and $\breve{\bbQ}_p$ in $\bbC_p$, and $\Gamma$ denote the absolute Galois group of $\bbQ_p$.

	Given a characteristic $p$ perfectoid space $S$, one can construct functorially a $G$-bundle over the relative Fargues--Fontaine curve $X_{\mathrm{FF},S}$ which we denote by $\calE_{b}$. 
	Attached to $(G,\mu)$ one can define a spatial diamond $\Gr_{G,\mu}$ over $\Spd \breve{E}$ that parametrizes $B^+_{\on{dR}}$-lattices with $G$-structure bounded by $\mu$ in the Bruhat order \cite[\S\S19-22]{SW20}. 
	Moreover, using Beauville--Laszlo descent one can identify $\Gr_{G,\mu}$ with the moduli space of $G$-bundle modifications of $\calE_b$ 
	\begin{equation*}
		\Gr_{G,\mu}(S)=	\{(\calE,f) \mid f:\calE \dashrightarrow \calE_b,\, \on{rel}(f)\leq \mu\}/\cong	
	\end{equation*}
	whose relative position is bounded by $\mu$.
	This gives a Beauville--Laszlo uniformization map:
	\begin{align*}
		\calB\calL_b:\Gr_{G,\mu}& \to \Bun_G \\
	(\calE,f)& \mapsto \calE.	
	\end{align*}
	Here $\Bun_G$ denotes the small v-stack of $G$-bundles on the Fargues--Fontaine curve as in the book of Fargues--Scholze \cite{FS21}.
	Let $\Bun_G^1$ denote the sub-v-stack of $\Bun_G$ of those $G$-bundles that are fiberwise trivial \cite[\S III.2.3]{FS21}. By \cite[Corollary 22.5.1, Proposition 24.1.2]{SW20}, the $b$-admissible locus, $\Gr^b_{G,\mu}:=\calB\calL_b^{-1}(\Bun_G^1)$, is non-empty and open in $\Gr_{G,\mu}$.\footnote{We warn the reader that in some literature $\Gr^b_{G,\mu}$ denotes $\calB\calL_1^{-1}(\Bun_G^b)$ instead.} 

	When $\mu$ is minuscule and $G$ is quasi-split we have an identification $\Gr_{G,\mu}=(G/P_\mu)^\diamondsuit$, where $P_\mu$ is the parabolic subgroup defined by $\mu$. In this case, $\Gr_{G,\mu}$ is (the diamond attached to) a generalized flag variety (see \cite[\S 2.2]{AGLR22} for a discussion of the diamond functor). 
	Moreover, we also have a formula:
	\begin{equation*}
		\Gr_{G,\mu}^b=\pi_{\on{GM}}(\calM^\diamondsuit_{(G,b,\mu)})	
	\end{equation*}
	Where $\calM_{(G,b,\mu)}$ is the local Shimura variety attached to $(G,b,\mu)$ and $\pi_{\on{GM}}$ is the Grothendieck--Messing period morphism \cite{RV14, SW20}. 
	By \cite[Lemma 15.6]{Sch17} $\Gr^b_{G,\mu}$ is the diamond associated to a unique analytic open subset of $G/P_\mu$ that we denote by $\calF(G,b,\mu)^a$. This open subset is the $p$-adic period domain associated to $(G,b,\mu)$, and $\Gr^b_{G,\mu}=\calF(G,b,\mu)^{a,\diamondsuit}$.
	Our main theorem is the following:
	\begin{theorem}
		\label{mainthm}
		The map $\Gr_{G,\mu}^b\to \Spd\breve{E}$ has connected geometric fibers. Moreover, $\Gr_{G,\mu}^b\subset \Gr_{G,\mu}$ is geometrically dense as spaces over $\Spd \breve{E}$.
	\end{theorem}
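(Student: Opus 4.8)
The plan is to treat the two assertions separately, reducing each to a statement where existing structural input applies. For density, recall that $\Gr_{G,\mu}$ is the Schubert variety attached to $\mu$ inside the $B^+_{\dR}$-affine Grassmannian of $G$, and that it is geometrically irreducible: the open cell $\Gr_{G,=\mu}$ of modifications of relative position exactly $\mu$ is a geometrically irreducible locally spatial diamond, dense in $\Gr_{G,\mu}$, as one extracts from the stratified description in \cite[\S\S19--22]{SW20} together with \cite{AGLR22}. Since irreducibility of the underlying topological space persists after base change along $\Spd\breve E\to\Spd\bbC_p$, and since $\Gr^b_{G,\mu}$ is open and nonempty by \cite[Corollary 22.5.1, Proposition 24.1.2]{SW20}, it meets --- hence, by irreducibility, is dense in --- every geometric fibre of $\Gr_{G,\mu}\to\Spd\breve E$. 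This disposes of the density assertion, and it remains to prove that the geometric fibres of $\Gr^b_{G,\mu}\to\Spd\breve E$ are connected.

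I would then carry out the standard reductions on the shtuka datum. Pushing out along $z$-extensions, passing through central isogenies and using Weil restriction, one reduces to $G$ with simply connected derived group (or adjoint); the bookkeeping of connected components under these operations is governed by the Kottwitz homomorphism $B(G)\to\pi_1(G)_{\Ga}$ and its compatibility with $\calB\calL_b$, so one is left to control a fixed $\pi_1(G)_{\Ga}$-torsor's worth of components explicitly. One then invokes the Hodge--Newton decomposition: when $(G,b,\mu)$ is HN-decomposable with respect to a Levi $M$, one exhibits $\Gr^b_{G,\mu}$ as a bundle with connected (affine-space-like) fibres over $\Gr^b_{M,\mu}$, thereby reducing to the case where $(G,b,\mu)$ is HN-irreducible.

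The core of the argument is a comparison with the special fibre. By Fargues--Scholze one has $\Bun_G^1=[\ast/\underline{G(\bbQ_p)}]$, so pulling back $\calB\calL_b|_{\Gr^b_{G,\mu}}\colon \Gr^b_{G,\mu}\to\Bun_G^1$ along the point $\ast\to\Bun_G^1$ classifying the trivial $G$-bundle realizes the infinite-level local Shimura variety $\calM_{(G,b,\mu),\infty}$ as a pro-étale $\underline{G(\bbQ_p)}$-torsor over $\Gr^b_{G,\mu}$; consequently $\pi_0\big(\Gr^b_{G,\mu,\bbC_p}\big)=\pi_0\big(\calM_{(G,b,\mu),\infty,\bbC_p}\big)/G(\bbQ_p)$. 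Now $\calM_{(G,b,\mu),\infty}$ admits a $v$-sheaf integral model over $\Spd\calO_{\breve E}$ --- built from the local model $\bbM_{G,\mu}$ of \cite{AGLR22} and its admissible locus --- which is topologically flat with special fibre the infinite-level affine Deligne--Lusztig variety $X_{\preceq\mu}(b)_\infty$ over $\bar\bbF_p$; the specialization map then yields a $G(\bbQ_p)$-equivariant surjection $\pi_0\big(X_{\preceq\mu}(b)_\infty\big)\onto \pi_0\big(\calM_{(G,b,\mu),\infty,\bbC_p}\big)$. For HN-irreducible data the left-hand side is a single $G(\bbQ_p)$-orbit, by the known description of $\pi_0$ of affine Deligne--Lusztig varieties (Chen--Kisin--Viehmann, He--Zhou, Nie, together with the $v$-sheaf-theoretic $\pi_0$ formalism at infinite level; cf.\ also \cite{MFS21}), so the quotient is a point and $\Gr^b_{G,\mu,\bbC_p}$ is connected. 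The main obstacle I anticipate is the integral comparison itself: precisely matching $\calM_{(G,b,\mu),\infty}$ (equivalently $\Gr^b_{G,\mu}$) with the generic fibre of an explicit open sub-$v$-sheaf of $\bbM_{G,\mu}$, and establishing the topological flatness that forces the specialization map to be surjective on $\pi_0$ --- i.e.\ that no connected component of the generic fibre is lost in the special fibre --- is exactly the step where the openness and density of $\Gr^b_{G,\mu}$ inside $\Gr_{G,\mu}$ and the stratification geometry of $\Bun_G$ must be fed back in.
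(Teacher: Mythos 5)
Your proposal diverges from the paper's argument and both halves have genuine gaps. For density: you argue that $\Gr^b_{G,\mu}$ is a non-empty open subset of an irreducible space and is therefore dense. But $\Gr^b_{G,\mu}$ is an \emph{analytic} open subset (the preimage of $\Bun_G^1$ under $\calB\calL_b$), not a Zariski-open one, and non-empty analytic opens of connected, even ``irreducible,'' analytic spaces need not be dense --- the open unit subdisc inside the closed unit disc already fails this. Density genuinely requires showing that the closed complement is small, and this is exactly what the paper proves: it bounds $\dim_\ell\bigl(\Gr^{(g,b)}_{G,\mu}\bigr)<d=\langle 2\rho_G,\mu\rangle$ for every $g\neq b_\mu$, and then both density and connectedness drop out of \cite[Corollary 4.11]{Han21}. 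Your density argument cannot be repaired without doing this dimension count (or something equivalent), at which point connectedness comes for free.

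For connectedness, your route is essentially Chen's original strategy run in reverse, and in the present generality it is circular. The input you invoke --- that $\pi_0\bigl(X_{\preceq\mu}(b)_\infty\bigr)$ is a single $G(\bbQ_p)$-orbit for HN-irreducible data --- is known from \cite{CKV15}, \cite{HZ20}, \cite{Nie21} only under restrictive hypotheses (unramified or residually split groups, special level structures, often minuscule $\mu$); the statement for arbitrary reductive $G/\bbQ_p$ at Iwahori level with non-minuscule $\mu$ is precisely \Cref{maincorollary}, which this paper \emph{deduces} from \Cref{mainthm} via \cite{gleason2022connected}. Moreover, the specialization-map step goes the wrong way for you: what the kimberlite formalism gives under mild hypotheses is a surjection $\pi_0(\text{generic fibre})\onto\pi_0(\text{special fibre})$; the surjection $\pi_0\bigl(X_{\preceq\mu}(b)_\infty\bigr)\onto\pi_0\bigl(\calM_{(G,b,\mu),\infty,\bbC_p}\bigr)$ you need amounts to injectivity of the former map, i.e.\ unibranchness of an infinite-level integral model of the local Shimura variety --- a hard theorem you cannot treat as bookkeeping. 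The paper avoids all of this: after reducing to the adjoint quasi-split case, it restricts to the big cell $L^+P\cdot\xi^\mu$ for the HN-parabolic $P$ of $\calE_b$, factors $\calB\calL_b$ through $\Bun_P$, shows the non-empty fibres of $\Gr^{\mathrm{d}^M_{\mu,b}}_{P,\mu}\to\Gr^\circ_{M,\mu}\times_{\Bun_M}\Bun_P^{\mathrm{d}^M_{\mu,b}}$ are equidimensional torsors (\Cref{lemma-fibers-have-same-dim}), and bounds the dimensions of the Newton strata of $\Bun_P$ by an induction on Levi subgroups (\Cref{proposition-bounding-dimensions-Mb}), using only that $b\in B(G,\mu)$ forces $\mu^\diamond-\nu_b$ to be non-negative. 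No ADLV or integral-model input enters.
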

	Let us put \Cref{mainthm} in context. In \cite{Kisin-mod-p-points} Kisin uses in an essential way the connected components of affine Deligne--Lusztig varieties (ADLV) to study the Langlands--Rapoport conjecture for integral models of Shimura varieties \cite{LR87}. 
	On the other hand, in \cite{Chen} Chen uses the connected components of ADLV to derive her main results on connected components of local Shimura varieties (LSV). 
	These two works motivated Chen--Kisin--Viehmann \cite{CKV15} to compute the connected components of ADLV at hyperspecial parahoric level building on previous work of Viehmann \cite{Vie08}.  
	Since then, several authors have pushed the strategy of \cite{CKV15} to compute connected components of ADLV deriving as corollaries results on the geometry of integral models of Shimura varieties (see the following results of Nie \cite[Theorem 1.1]{Nie}, He--Zhou \cite[Theorem 0.1]{HZ20}, Hamacher \cite[Theorem 1.1(3)]{Ham20}, Nie \cite[Theorem 0.2]{Nie21}). \\

	Now, Chen proves and uses a version of \Cref{mainthm} for period domains that arise from unramified Rapoport--Zink data as a key stepping stone to derive the main results in her work. This is where the connected components of ADLV enter in her argument.
	In \cite{gleason2022connected}, the first author together with Lim and Xu show that Chen's reasoning can be reversed, and use \Cref{mainthm} to compute the connected components of ADLV and the connected components of LSV \cite{gleason2022connected}. 

	Let us fix some notation. 
	Let $\calI$ denote an Iwahori group scheme over $\bbZ_p$ with generic fiber $G$. 
	Let $\varphi$ denote the canonical lift of arithmetic Frobenius to $\breve{\bbZ}_p$.
	Let $\on{Adm}(\mu)\subseteq \calI(\breve{\bbZ}_p)\backslash G(\breve{\bbQ}_p)/\calI(\breve{Z}_p)$ denote the $\mu$-admissible set of Kottwitz--Rapoport \cite{KR00}. 
	Let
	\begin{equation}X_\mu(b)=\{g\calI(\breve{\bbZ}_p)\mid g^{-1}b\varphi(g)\in \calI(\breve{\bbZ}_p)  \on{Adm}(\mu) \calI(\breve{\bbZ}_p) \}. \end{equation}
	This is the closed affine Deligne--Lusztig variety attached to $(G,b,\mu,\calI)$.
	It admits the structure of a perfect scheme locally perfectly of finite presentation \cite{Zhu17}, \cite{bhatt_scholze_projectivity_of_the_witt_vector_affine_grassmannian}.
	Let $\kappa_G:G(\breve{\bbQ}_p)\to \pi_1(G)_I$ denote the Kottwitz map \cite[7.4]{KottwitzII}.
	The map $\kappa_G$ induces a map $\omega_G:\pi_0(X_\mu(b))\to \pi_1(G)_I$ that factors through a unique coset $c_{b,\mu}\pi_1(G)_I^\varphi\in \pi_1(G)_I/\pi_1(G)_I^\varphi$.
	Here is an interesting consequence of our main theorem.
	\begin{corollary}
		\label{maincorollary}
		The Kottwitz map induces a bijection 
		\begin{equation}
			\omega_G:\pi_0(X^{\calK_p}_\mu(b))\xrightarrow{\cong} c_{b,\mu}\pi_1(G)_I^\varphi,
	\end{equation}
whenever $(b,\mu)$ is HN-irreducible. 
	\end{corollary}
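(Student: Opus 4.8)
\emph{Sketch of the argument.} The plan is to deduce the statement about $\pi_0\!\big(X_\mu^{\calK_p}(b)\big)$ from \Cref{mainthm} by a specialization argument, passing from this affine Deligne--Lusztig variety, realized as the special fibre of the integral local Shimura variety, to the generic fibre, which one controls via the Grothendieck--Messing period morphism and the connectedness of the $b$-admissible locus. This is carried out in \cite{gleason2022connected}; here $(b,\mu)$ is HN-irreducible and $\calK_p=\calG(\bbZ_p)$ is the fixed parahoric. First I would dispose of the soft half. Since $b\in B(G,\mu)$ the set $X_\mu^{\calK_p}(b)$ is non-empty, the defining condition $g^{-1}b\varphi(g)\in\calK_p\Adm(\mu)\calK_p$ forces $\kappa_G(g)$ to stay in a single coset of $\pi_1(G)_I^\varphi$ (namely $c_{b,\mu}\pi_1(G)_I^\varphi$), and the left $J_b(\bbQ_p)$-action translates the value of $\kappa_G$ by every element of $\pi_1(G)_I^\varphi$; hence $\omega_G$ is surjective onto $c_{b,\mu}\pi_1(G)_I^\varphi$ and acts transitively on its own fibres. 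So the content is injectivity of $\omega_G$ on $\pi_0$, equivalently: one non-empty $\kappa_G$-level set of $X_\mu^{\calK_p}(b)$ is connected.

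Next I would bring in the integral model. Let $\mathscr{M}_{(G,b,\mu,\calK_p)}$ be the moduli $v$-sheaf of $\calG$-shtukas with one leg bounded by $\mu$ and framed by $b$ (as in \cite{SW20} at hyperspecial level, and in the work of Pappas--Rapoport in general): a formal-scheme-like small $v$-sheaf over $\Spd\calO_{\breve E}$ whose generic fibre is $\calM_{(G,b,\mu,\calK_p)}$ and whose reduced special fibre is (the $v$-sheaf attached to) $X_\mu^{\calK_p}(b)$. Using the normality of the associated $v$-sheaf local models --- which is the main theorem of \cite{AGLR22} --- one checks that $\mathscr{M}_{(G,b,\mu,\calK_p)}$ is a \emph{normal} kimberlite, so that Gleason's theory of specialization maps yields a bijection
\[
\on{sp}\colon\pi_0\!\big(\calM_{(G,b,\mu,\calK_p)}\times_{\breve E}\bbC_p\big)\xrightarrow{\ \cong\ }\pi_0\!\big(X_\mu^{\calK_p}(b)\big)
\]
intertwining $\omega_G$ with the corresponding $\kappa$-invariant on the generic fibre. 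It therefore suffices to show that each non-empty level set of that invariant on $\calM_{(G,b,\mu,\calK_p)}\times_{\breve E}\bbC_p$ is geometrically connected.

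Finally I would trivialize the level structure and apply \Cref{mainthm}. At infinite level $\calM_{(G,b,\mu),\infty}$ is, via $\pi_{\on{GM}}$ and the identity $\calB\calL_b^{-1}(\Bun_G^1)=\Gr_{G,\mu}^b$, the $\underline{G(\bbQ_p)}$-torsor over $\Gr_{G,\mu}^b$ classified by the composite $\Gr_{G,\mu}^b\to\Bun_G^1\simeq[\ast/\underline{G(\bbQ_p)}]$, and $\calM_{(G,b,\mu,\calK_p)}=\calM_{(G,b,\mu),\infty}/\underline{\calK_p}$. By \Cref{mainthm} the base $\Gr_{G,\mu}^b\times_{\breve E}\bbC_p$ is connected, so $\pi_0\!\big(\calM_{(G,b,\mu),\infty}\times_{\breve E}\bbC_p\big)$ is a transitive $G(\bbQ_p)$-set $N\backslash G(\bbQ_p)$, where $N$ is the image of the monodromy of this pro-\'etale torsor, and hence $\pi_0\!\big(\calM_{(G,b,\mu,\calK_p)}\times_{\breve E}\bbC_p\big)=N\backslash G(\bbQ_p)/\calK_p$. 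What remains --- and what I expect to be the main obstacle --- is to identify this double coset space, via $\kappa_G$, with $c_{b,\mu}\pi_1(G)_I^\varphi$, i.e.\ to show that $N$ is as large as the $\kappa_G$-constraint allows, so that $\calM_{(G,b,\mu),\infty}$ has no connected components beyond those seen by $\kappa_G$.

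Here the HN-irreducibility hypothesis is indispensable. The way I would exploit it is through the commuting $J_b(\bbQ_p)$-action on $\calM_{(G,b,\mu),\infty}$ together with the structure theory of $\Bun_G$ \cite{FS21}: HN-irreducibility is precisely the condition ensuring that $\pi_0\!\big(\calM_{(G,b,\mu),\infty}\times\bbC_p\big)$ is a single $\big(G(\bbQ_p)\times J_b(\bbQ_p)\big)$-orbit with the expected stabilizer, which pins down $N$. In the reducible case additional components appear, governed --- via the Hodge--Newton decomposition --- by proper Levi subgroups, and one would have to iterate; that is why the hypothesis cannot be removed at this stage. Combining the three steps gives $\pi_0\!\big(X_\mu^{\calK_p}(b)\big)\cong N\backslash G(\bbQ_p)/\calK_p\cong c_{b,\mu}\pi_1(G)_I^\varphi$, with the isomorphism induced by $\omega_G$, as claimed.
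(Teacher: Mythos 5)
Your outline is essentially the paper's own route: the paper gives no self-contained proof of \Cref{maincorollary}, but deduces it from \Cref{mainthm} via exactly the machinery you describe from \cite{gleason2022connected} --- the specialization map from the kimberlite integral model (whose good behaviour rests on normality of the $v$-sheaf local models of \cite{AGLR22}), the infinite-level $\underline{G(\bbQ_p)}$-torsor over $\Gr^b_{G,\mu}$, and the determination of the monodromy group under HN-irreducibility. The one step you flag as the remaining obstacle, pinning down $N$, is precisely the technical content carried out in \cite{gleason2022connected}, so your sketch matches the intended argument.
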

	Our work, together with \cite{gleason2022connected}, finishes the problem of computing connected components of ADLV in mixed characteristic.\\ 

	Let us sketch the proof of our main theorem in the case where $G$ is quasi-split. Fix a Borel $B\subseteq G$.
	When $b$ is basic \Cref{mainthm} can be proved directly, and it is an unpublished result of Hansen--Weinstein. Suppose that $b$ is not basic and let $P\subseteq G$ be the parabolic generated by $B$ and the centralizer of $\nu_b$.

	To prove that a space is connected it suffices to prove that a dense subset of it is connected, this allows us to replace $\Gr_{G,\mu}$ by the dense open subset $L^+P\cdot \xi^\mu$.  
	Now, by Beauville--Laszlo descent, $L^+P\cdot \xi^\mu$ gets identified with the space of modifications of $\calE^P_b$, where $\calE_b^P$ is the Harder--Narasimhan $P$-reduction of $\calE_b$.   
Moreover, on this open subset we have a factorization:
\begin{equation*}
	\calB\calL_b:L^+P\cdot \xi^\mu \xrightarrow{\calB\calL_{P,b}} \Bun_P\to \Bun_G.
\end{equation*}
Recall the following general fact.
	Let $X$ be a connected locally spatial diamond that is smooth and partially proper over $\Spa \bbC_p$. Suppose we have an open immersion $j:U\to X$ and complementary closed immersion $i:Z\to X$.  
	For $U$ to be connected, it suffices that $\on{dim}(Z)<\on{dim}(X)$ by \cite[Corollary 4.11]{Han21}.
	In our case $X=L^+P\cdot \xi^\mu$ and  $U=L^+P\cdot \xi^\mu\cap \calB\calL_b^{-1}(\Bun_G^1)$.
	An important observation is that the non-empty fibers of $\calB\calL_{P,b}$ are $\on{Aut}_{\on{Fil}}(\calE_b)$-torsors, see \Cref{lemma-fibers-have-same-dim}. In particular, they all have the same dimension. 
	Also, $\calB\calL_{P,b}$ factors through one connected component $\Bun^\kappa_P\subseteq \Bun_P$ determined by $\mu-\nu_b$.

	Let $Y=\Bun^\kappa_P\setminus \calB\calL_b^{-1}(\Bun_G^1)$.
	The second key point is that $\dim(Y)<\dim(\Bun^\kappa_P)$. 
	To prove this, we study the following diagram: 
	\begin{equation}
		\label{diagram-eisenstein}
		\begin{tikzcd}
			\Bun_P^{b_M} \ar{r} \ar{d}		& \Bun_P \ar{r} \ar{d} & \Bun_G \\
			\Bun_M^{b_M}\ar{r}	& \Bun_M
		\end{tikzcd}
	\end{equation}
	where $M$ is the Levi quotient of $P$, $b_M\in B(M)$ and the square is Cartesian. When $b_M$ is basic and $\nu_{b_M}$ is $G$-dominant, $\Bun_P^{b_M}\to \Bun_G$ is smooth and dimensions are easy to understand. 
	On the other hand, the case when $b_M$ is basic and $\nu_{b_M}$ is a non-negative sum of positive coroots, i.e.~ $\nu_{b_M}\in \bbQ_{\geq 0} \Phi_G^+$, can be understood inductively from the case where $\nu_{b_M}$ is $G$-dominant. 
	This is where $b\in B(G,\mu)$ is important. Indeed, in this case $\mu^\diamond-\nu_b$ is in $\bbQ_{\geq 0}\Phi_G^+$ and the relevant $b_M\in B(M)_{\on{basic}}$ satisfies that $\nu_{b_M}$ is also in $\bbQ_{\geq 0}\Phi_G^+$. \\

	We now explain the organization of this article. We start $\mathsection 2$ with some cohomological considerations that allow us to work with the notion of dimension in a meaningful way. Then, we make some preparations explaining the combinatorics involving the induction process that reduces the $\nu_{b_M}\in \bbQ_{\geq 0}\Phi^+_G$ case to the $G$-dominant case. Afterwards, we bound dimensions of Newton strata that arise from the diagram \ref{diagram-eisenstein}. Finally, $\mathsection 3$ is dedicated to proving \Cref{mainthm}.

	\subsection{Acknowledgements} 
	We thank Linus Hamann for explaining certain ideas that are key to this work.
	This paper was written during stays at Max-Planck-Institut für Mathematik and Universität Bonn, we are thankful for the hospitality of these institutions. The project has received funding by DFG via the Leibniz-Preis of Peter Scholze (I.G), and (J.L.) from the Max-Planck-Institut für Mathematik.

	We would also like to thank Alexander Ivanov, Louis Jaburi, Andreas Mihatsch, Michael Rapoport, Peter Scholze, Eva Viehmann and Mingjia Zhang for interesting conversations and feedback on this paper. 
	
\section{Bounding dimensions of Newton strata.}
\subsection{Dimension for stacky maps}
In the following sections we bound the dimensions of certain Artin v-stacks.
Since we do not intend to develop foundations, we will work with an ad-hoc notion of dimension. 
Let $f:X\to Y$ be a \textit{fine} morphism of Artin v-stacks \cite[Definition 1.3]{GHW22} and let $n\in \bbN$.
Let $S\to Y$ be a map with $S$ a spatial diamond, let $f_S:X_S\to S$ denote the base change, and let $\calF\in D^{\leq 0}_\et(X_S,\bbF_\ell)$.
\begin{definition}
We say that the \textit{$\ell$-cohomological dimension of $f$ is bounded by $n$}, which we abbreviate as ${\dim}_\ell(f)\leq n$ if: for all $S\to Y$ and $\calF$ as above:
\begin{equation}
f_{S,!}\calF\in D^{\leq 2n}_{\acute{e}t}(S,\bbF_\ell),
\end{equation}
and we write $\dim_\ell(X)\leq n$ when $Y=\ast$.
\end{definition}

\begin{convention}
From now on we will only consider maps of Artin v-stacks that are fine and we will not include this adjective in our statements.
\end{convention}
Actually, the stacky morphisms used in this article are all obtained as compositions of smooth maps and locally closed immersions which are all fine morphisms. 
\begin{lemma}
	\label{lemma-dimension-subaditive}
   Let $f:X\to Y$ and $g:Y\to Z$ be map of Artin v-stacks such that $\dim_\ell(f)\leq n$ and $\dim_\ell(g)\leq m$. Then $\dim_\ell(g\circ f)\leq m+n$.
\end{lemma}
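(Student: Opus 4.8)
The plan is to unwind the definition of $\dim_\ell$ directly and chain the two cohomological bounds. Let $h = g \circ f : X \to Z$. To show $\dim_\ell(h) \leq m+n$, I fix an arbitrary map $S \to Z$ with $S$ a spatial diamond, form the base change $h_S : X_S \to S$, and take an arbitrary $\calF \in D^{\leq 0}_\et(X_S, \bbF_\ell)$; the goal is $h_{S,!}\calF \in D^{\leq 2(m+n)}_\et(S, \bbF_\ell)$.

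The key point is that base change along $S \to Z$ organizes into a commuting diagram. Set $Y_S := Y \times_Z S$, with projection $g_S : Y_S \to S$, and note $X_S = X \times_Z S = X \times_Y Y_S$, so that $h_S$ factors as $X_S \xrightarrow{f_S} Y_S \xrightarrow{g_S} S$, where $f_S$ is the base change of $f$ along $Y_S \to Y$. Now $Y_S$ is an Artin v-stack (not necessarily a spatial diamond), so to apply the hypothesis $\dim_\ell(f) \leq n$ I must further localize: choose a chart, i.e. a surjective map $T \to Y_S$ from a spatial diamond $T$ that is moreover cohomologically smooth (this exists since $Y_S \to S$ is a fine morphism of Artin v-stacks and $S$ is a spatial diamond, so $Y_S$ admits such charts by the definition of Artin v-stack). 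Pulling $\calF$ back along $X_T := X_S \times_{Y_S} T \to X_S$ and using that $!$-pushforward commutes with the base change $T \to Y_S$ (smooth base change / general base change for $\acute{e}$tale sheaves on small v-stacks, \cite{Sch17}), the hypothesis applied to the map $T \to Y$ gives that the $!$-pushforward of $\calF|_{X_T}$ along $X_T \to T$ lands in $D^{\leq 2n}_\et(T, \bbF_\ell)$. Descending along the cohomologically smooth surjection $T \to Y_S$, one concludes $f_{S,!}\calF \in D^{\leq 2n}_\et(Y_S, \bbF_\ell)$ in the appropriate sense, i.e. this object, viewed on any spatial-diamond chart of $Y_S$, sits in degrees $\leq 2n$.

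Next I feed this into the bound for $g$. Since $f_{S,!}\calF$ is now (locally on $Y_S$) in $D^{\leq 2n}_\et$, it is in particular an object of $D^{\leq 2n}_\et(Y_S, \bbF_\ell)$; applying the hypothesis $\dim_\ell(g) \leq m$ to the map $S \to Z$ (with the Artin v-stack $Y_S = Y \times_Z S$ and the sheaf $f_{S,!}\calF[2n]^{\vee\text{-shift}} \in D^{\leq 0}_\et$, or more cleanly by shifting: $f_{S,!}\calF \in D^{\leq 2n}$ means $f_{S,!}\calF[2n] \in D^{\leq 0}$) yields $g_{S,!}(f_{S,!}\calF[2n]) \in D^{\leq 2m}_\et(S,\bbF_\ell)$, hence $g_{S,!}f_{S,!}\calF \in D^{\leq 2m+2n}_\et(S,\bbF_\ell)$. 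By the compatibility of $!$-pushforward with composition, $g_{S,!}f_{S,!}\calF = h_{S,!}\calF$, which is the claim. Since $S$ and $\calF$ were arbitrary, $\dim_\ell(h) \leq m+n$.

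The main obstacle is the bookkeeping in the middle step: the definition of $\dim_\ell$ only tests against base changes to \emph{spatial diamonds}, but after base-changing $f$ one is over the Artin v-stack $Y_S$, so one has to pass to a cohomologically smooth chart $T \to Y_S$, verify that the hypothesis on $f$ indeed applies to $T \to Y$ (this uses that $T$ is a spatial diamond and that $X \times_Y T = X_T$ is the relevant base change), and then check that the degree bound descends from $T$ back to $Y_S$ and is preserved when subsequently viewed as input to the $g$-bound. All of this is routine given the stability properties of $D^{\leq 0}_\et$ under $!$-pushforward along cohomologically smooth maps and under pullback, together with base change; I would cite \cite{Sch17} (and \cite{GHW22} for the notion of fine morphism and the existence of charts) rather than reproving these. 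No genuinely hard estimate is involved — the content is entirely that "$\leq 2n$ then $\leq 2m$" composes to "$\leq 2(m+n)$", i.e. additivity of the degree bounds under composition of $!$-pushforwards.
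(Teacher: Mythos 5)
Your proof is correct and follows essentially the same route as the paper: base change along $S \to Z$, push forward along $f_S$, observe the resulting complex lies in $D^{\leq 2n}_\et(Y_S,\bbF_\ell)$, and then apply the hypothesis on $g$. The only difference is that you spell out (via cohomologically smooth charts and base change) why the intermediate bound on $Y_S$ holds, a point the paper's one-line proof leaves implicit.
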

\begin{proof}
	Let $S\to Z$ be a map and denote by $X_S$ and $Y_S$ the base changes.
	Let $\calF\in D^{\leq 0}_\et(X_S,\bbF_\ell)$. 
	Observe that $f_{S,!}\calF[2n] \in D^{\leq 0}_\et(Y_S,\bbF_\ell)$, 
	which implies that $g_{!,S}f_{S,!}\calF[2n]\in D^{\leq 2m}(S,\bbF_\ell)$. It follows that $\dim_\ell(g\circ f)\leq n+m$.
\end{proof}
\begin{lemma}
	\label{lemma-fibers-bound-dimension}
	Let $f:X\to Y$ be a map of Artin v-stacks. Suppose that for any $s:\Spa(C,C^+)\to X$ the fibers satisfy $\dim_\ell(X_s)\leq n$. Then $\dim_\ell(f)\leq n$.
\end{lemma}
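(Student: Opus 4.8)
The plan is to verify the defining inequality of $\dim_\ell(f)\le n$ by testing it stalkwise on the base and using base change for the lower-shriek functor to identify each stalk with the compactly supported cohomology of a fiber of $f$, where the hypothesis applies directly. Concretely, I would fix an arbitrary map $S\to Y$ with $S$ a spatial diamond and an arbitrary $\calF\in D^{\le 0}_\et(X_S,\bbF_\ell)$, and recall that for a spatial diamond $S$ the étale $t$-structure on $D_\et(S,\bbF_\ell)$ is detected by stalks at geometric points: there is a conservative family of stalk functors $\bar s^{\ast}\colon D_\et(S,\bbF_\ell)\to D(\bbF_\ell)$ indexed by geometric points $\bar s=\Spa(C,C^+)\to S$, and an object lies in $D^{\le 2n}_\et(S,\bbF_\ell)$ if and only if all of its stalks lie in $D^{\le 2n}(\bbF_\ell)$ (see \cite{Sch17}). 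Hence it suffices to prove that $\bar s^{\ast}f_{S,!}\calF\in D^{\le 2n}(\bbF_\ell)$ for every geometric point $\bar s$ of $S$.

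Next I would apply base change for $f_!$ in the six-functor formalism for fine morphisms of Artin v-stacks (\cite{FS21, GHW22}): forming the Cartesian square along $\bar s\to S$ produces a canonical isomorphism $\bar s^{\ast}f_{S,!}\calF\simeq R\Gamma_c(X_{\bar s},\calF|_{X_{\bar s}})$, where $X_{\bar s}:=X_S\times_S\bar s=X\times_Y\bar s$ is the fiber of $f$ over the geometric point of $Y$ induced by $\bar s\to S\to Y$, and $R\Gamma_c(X_{\bar s},-)$ denotes compactly supported cohomology, i.e.\ $f_{\bar s,!}$ along the structure map $f_{\bar s}\colon X_{\bar s}\to\bar s$ followed by taking the stalk. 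Since pullback of étale sheaves is $t$-exact, $\calF|_{X_{\bar s}}$ still lies in $D^{\le 0}_\et(X_{\bar s},\bbF_\ell)$.

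It then remains to invoke the hypothesis. If $X_{\bar s}=\emptyset$ there is nothing to prove, since $R\Gamma_c(\emptyset,-)=0$; otherwise $X_{\bar s}$ admits a geometric point, which maps to $X$, so the assumption gives $\dim_\ell(X_{\bar s})\le n$ (the cohomological dimension of a fiber being unaffected by the choice of geometric point used to present its base). Taking the auxiliary spatial diamond in the definition of $\dim_\ell(X_{\bar s})\le n$ to be $\bar s$ itself, this says exactly that $R\Gamma_c(X_{\bar s},\calG)\in D^{\le 2n}(\bbF_\ell)$ for every $\calG\in D^{\le 0}_\et(X_{\bar s},\bbF_\ell)$, in particular for $\calG=\calF|_{X_{\bar s}}$. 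Combining the above yields $\bar s^{\ast}f_{S,!}\calF\in D^{\le 2n}(\bbF_\ell)$ for all $\bar s$, hence $f_{S,!}\calF\in D^{\le 2n}_\et(S,\bbF_\ell)$, and since $S$ and $\calF$ were arbitrary, $\dim_\ell(f)\le n$. The only step that is not pure formalism is the base change isomorphism $\bar s^{\ast}f_{S,!}\calF\simeq R\Gamma_c(X_{\bar s},\calF|_{X_{\bar s}})$, which rests on the construction of $f_!$ and proper base change for fine morphisms of Artin v-stacks; the remainder is bookkeeping with the étale $t$-structure and unwinding the definition of $\dim_\ell$.
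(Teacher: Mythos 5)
Your proof is correct and follows essentially the same route as the paper's: the paper's one-line argument likewise combines base change for $f_!$ along geometric points (citing \cite[Theorem 1.9.(2)]{Sch17} and \cite[Theorem 1.4.(4)]{GHW22}) with the fact that membership in $D^{\leq 2n}_\et(S,\bbF_\ell)$ is detected on geometric points. You have simply written out the details, including the correct observation that empty fibers are harmless given that the hypothesis is phrased over points of $X$.
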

\begin{proof}
	This follows from \cite[Theorem 1.9.(2)]{Sch17}, \cite[Theorem 1.4.(4)]{GHW22}, since $\calF\in D^{\leq 2n}_\et(S,\bbF_\ell)$ can be checked on geometric point.
\end{proof}
\begin{lemma}
    Let $f:X\to Y$ be a surjective $\ell$-cohomologically smooth map of Artin v-stacks with constant $\ell$-dimension $d$. 
    Let $g:Y\to Z$ be a map of Artin v-stacks. Then $\dim_\ell(g)\leq n$ if and only if $\dim_\ell(g\circ f)\leq n+d$.
\end{lemma}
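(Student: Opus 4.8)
The statement has the form of an "if and only if" about $\ell$-cohomological dimension, where $f:X\to Y$ is surjective and $\ell$-cohomologically smooth of constant $\ell$-dimension $d$. One direction is essentially \Cref{lemma-dimension-subaditive} combined with the fact that a smooth morphism of constant $\ell$-dimension $d$ has $\dim_\ell(f)\le d$; the reverse direction is the substantive one and uses surjectivity together with smooth descent of the $t$-structure.

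**Forward direction ($\dim_\ell(g)\le n \Rightarrow \dim_\ell(g\circ f)\le n+d$).** First I would record that an $\ell$-cohomologically smooth map of constant $\ell$-dimension $d$ satisfies $\dim_\ell(f)\le d$: for $S\to Y$ a spatial diamond and $\calF\in D^{\le 0}_\et(X_S,\bbF_\ell)$, cohomological smoothness of $f_S$ gives a dualizing complex in degree $-2d$ and Poincaré duality $f_{S,!}\calF \simeq f_{S,*}(\calF\otimes \omega_{f_S})[-2d]$ up to a twist, and $f_{S,*}$ is left $t$-exact, so $f_{S,!}\calF\in D^{\le 2d}_\et(S,\bbF_\ell)$. (Alternatively one cites the relevant statement in \cite{Sch17} or \cite{GHW22} directly.) Then $\dim_\ell(g\circ f)\le \dim_\ell(f)+\dim_\ell(g)\le d+n$ by \Cref{lemma-dimension-subaditive}.

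**Reverse direction ($\dim_\ell(g\circ f)\le n+d \Rightarrow \dim_\ell(g)\le n$).** This is the main point. Fix $S\to Z$ with $S$ a spatial diamond, and $\calF\in D^{\le 0}_\et(Y_S,\bbF_\ell)$; we must show $g_{S,!}\calF\in D^{\le 2n}_\et(S,\bbF_\ell)$. The idea is to pull $\calF$ back along $f$, apply the hypothesis on $g\circ f$, and then descend. Pulling back: since $f_S:X_S\to Y_S$ is $\ell$-cohomologically smooth of constant $\ell$-dimension $d$, the functor $f_S^*$ shifts the $t$-structure by $2d$ — more precisely $f_S^!\simeq f_S^*[2d]$ up to a twist, and $f_S^*$ is $t$-exact — so $f_S^*\calF[2d]\in D^{\le 2d}_\et(X_S,\bbF_\ell)$, hence $f_S^*\calF[2d]\in D^{\le 0}$ only after the shift; better to say $f_S^!\calF = f_S^*\calF(d)[2d]\in D^{\le 0}_\et(X_S,\bbF_\ell)$ since $\calF\in D^{\le 0}$ and $f_S^*$ is $t$-exact. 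Applying the hypothesis $\dim_\ell(g\circ f)\le n+d$ to the object $f_S^!\calF \in D^{\le 0}_\et(X_S,\bbF_\ell)$ gives $(g\circ f)_{S,!}\, f_S^!\calF \in D^{\le 2n+2d}_\et(S,\bbF_\ell)$. Now $(g\circ f)_{S,!}\, f_S^! \simeq g_{S,!}\, f_{S,!} f_S^!$, and I would use the projection-type / base-change identity $f_{S,!}f_S^!\calF \simeq \calF \otimes f_{S,!}f_S^!\bbF_\ell$ together with the fact that $f_S$ surjective and $\ell$-cohomologically smooth of $\ell$-dimension $d$ forces $f_{S,!}f_S^!\bbF_\ell$ to be concentrated in degrees $[-2d,0]$ with $H^{-2d}$ a line bundle that is "visibly nonzero" — i.e., $f_{S,!}f_S^!\bbF_\ell[-2d]\in D^{\ge 0}$ and has a surjection onto $\bbF_\ell$ (the trace map), or at least the trace map $f_{S,!}f_S^!\bbF_\ell\to \bbF_\ell$ realizes $\bbF_\ell$ as a retract after the appropriate shift on a cover. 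The cleanest route: by surjectivity, $S$ is covered by the image, and the counit $f_{S,!}f_S^!\to \id$ is such that on $v$-cohomology the natural map $\tau^{\ge 2n}\!\bigl(g_{S,!}\calF\bigr)$ injects into $\tau^{\ge 2n}\bigl(g_{S,!}f_{S,!}f_S^!\calF[-2d]\bigr)=0$ — one extracts this from $\ell$-cohomological smoothness plus surjectivity exactly as in the proof of \cite[Theorem 1.9]{Sch17} that cohomological smoothness can be checked after a smooth surjective base change. Hence $g_{S,!}\calF\in D^{\le 2n}$, as desired.

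**Main obstacle.** The only real difficulty is the descent step: extracting $\dim_\ell(g)\le n$ from the bound on $\dim_\ell(g\circ f)$ requires knowing that $f_{S,!}f_S^!\bbF_\ell$, shifted into degree $0$, surjects onto (or contains as a retract) the constant sheaf, so that $g_{S,!}\calF$ is a "subquotient up to shift" of $g_{S,!}f_{S,!}f_S^!\calF$. This is precisely where surjectivity of $f$ is used, and it is the content of the interplay between the trace map for $\ell$-cohomologically smooth morphisms and $v$-descent of the $t$-structure in \cite{Sch17}; everything else is formal manipulation of six functors and \Cref{lemma-dimension-subaditive}.
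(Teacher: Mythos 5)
Your forward direction is fine and matches the paper in substance: one shows $\dim_\ell(f)\le d$ from $\ell$-cohomological smoothness (the paper does this via adjunction and the invertibility of $f^!\bbF_\ell$ in degree $-2d$ rather than via Poincar\'e duality for $f_*$, but either works) and then applies \Cref{lemma-dimension-subaditive}.

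The reverse direction, however, has a genuine gap, and it sits exactly where you flagged "the only real difficulty." Your strategy is to control $g_{S,!}\calF$ by means of $g_{S,!}f_{S,!}f_S^!\calF\simeq g_{S,!}(\calF\otimes f_{S,!}f_S^!\bbF_\ell)$, using the counit/trace. But the counit $f_{S,!}f_S^!\calF\to\calF$ maps the object you control \emph{to} the object you want to control, so you need $\calF$ to be (up to connective error) a retract or a quotient-by-connective-stuff of $\calF\otimes f_{S,!}f_S^!\bbF_\ell$. Surjectivity of $f$ only gives that the trace map $H^0(f_{S,!}f_S^!\bbF_\ell)\to\bbF_\ell$ is an epimorphism; it does not split in general (for a connected $\underline{\bbZ/\ell}$-torsor the composite of the unit with the trace is multiplication by $\ell=0$), and an epimorphism in degree $0$ does not yield the injection $\tau^{\ge 2n+1}(g_{S,!}\calF)\hookrightarrow\tau^{\ge 2n+1}(g_{S,!}f_{S,!}f_S^!\calF)$ you assert: trying to peel off the kernel of the trace and the lower truncation of $f_{S,!}f_S^!\bbF_\ell$ requires a bound on $g_{S,!}$ applied to connective objects, which is precisely what you are trying to prove. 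The appeal to "as in the proof of \cite[Theorem 1.9]{Sch17}" does not supply the missing step, since that result is about checking smoothness locally, not about descending amplitude bounds on $Rg_!$ along the counit.

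The paper's actual argument avoids this by dualizing: by adjunction, $\dim_\ell(g)\le n$ is equivalent to $g_S^!\calG\in D^{\ge 1}_\et(Y_S,\bbF_\ell)$ for every $\calG\in D^{\ge 2n+1}_\et(S,\bbF_\ell)$. Coconnectivity is a pointwise condition on geometric points, so by surjectivity of $f_S$ it may be tested after applying $f_S^*$; then $f_S^*g_S^!\calG=f_S^!g_S^!\calG\otimes(f_S^!\bbF_\ell)^{-1}=(g_S\circ f_S)^!\calG\otimes(f_S^!\bbF_\ell)^{-1}$, and the hypothesis $\dim_\ell(g\circ f)\le n+d$, in its adjoint form, gives $(g_S\circ f_S)^!\calG\in D^{\ge 1-2d}_\et$, whence the twist by the invertible object $(f_S^!\bbF_\ell)^{-1}$ (concentrated in degree $2d$) lands in $D^{\ge 1}_\et$. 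This is the correct way to exploit surjectivity here: coconnectivity of a $!$-pullback descends along surjections, whereas connectivity bounds on $!$-pushforwards do not descend along the counit. I would rewrite your reverse direction along these lines.
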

\begin{proof}
	To bound $\dim_\ell(g\circ f)$ it suffices by \Cref{lemma-dimension-subaditive} to prove $\dim_\ell(f)\leq d$.
	It suffices to prove that $\on{RHom}(f_{S,!}\calF, \calG)=0$ for every map $S\to Y$, every object $\calG\in D_\et^{\geq 2d+1}(S,\bbF_\ell)$ and every object $\calF\in D^{\leq 0}_\et(X,\bbF_\ell)$.
	By adjunction, we may prove $\on{RHom}(\calF, f^!_S\calG)=0$ instead.
	Now, by $\ell$-cohomological smoothness $f^!\calG=f^*\calG \otimes f^!\bbF_\ell$ and $f^!\bbF_\ell$ is an invertible object in $D_\et(X,\bbF_\ell)$ concentrated in degree $-2d$.
	In particular, $f_S^!\calG\in D_\et^{\geq 1}(X,\bbF_\ell)$ while $\calF\in D_\et^{\leq 0}(X_S,\bbF_\ell)$. 

	To prove $\dim_\ell(g)\leq n$, let $S\to Z$ a map with $S$ a spatial diamond, let $\calF\in D^{\leq 0}_\et(Y_S,\bbF_\ell)$ and let $\calG\in D_\et^{\geq 2n+1}(S,\bbF_\ell)$.
As above, it suffices to prove: 
\begin{equation}
\on{RHom}(\calF, g_S^!\calG)=0
\end{equation}
In other words, we wish to prove that $g_S^!\calG\in D^{\geq 1}_\et(Y_S,\bbF_\ell)$, for all $\calG\in  D_\et^{\geq 2n+1}(S,\bbF_\ell)$. 
This can be verified on geometric points so we may show 
\begin{equation}
	\label{random-equation}
	f_S^*g_S^!\calG\in D^{\geq 1}_\et(X_S,\bbF_\ell)
\end{equation}
instead, since $f_S$ is surjective.
 By smoothness, $f^!_S\bbF_\ell\in D^{-2d}_\et(X_S,\bbF_\ell)$ is an invertible object and $f_S^*g_S^!\calG=f_S^!g_S^!\calG \otimes (f_S^!\bbF_\ell)^{-1}$.
Since, by assumption $f_S^!g_S^!\calG \in D^{\geq 1-{2d}}_\et(X_S,\bbF_\ell)$, we can verify that \ref{random-equation} holds.
\end{proof}
\begin{lemma}
	\label{lemma-filtering-dimension}
	Let $f:X\to Y$ be a map of Artin v-stacks. Let $i:Z\to X$ be a closed immersion and let $j:U\to X$ denote the complementary open immersion. Suppose that $\dim_\ell(i\circ f)\leq n$ and that $\dim_\ell(j\circ f)\leq n$, then $\dim_\ell(f)\leq n$. Conversely if $\dim_\ell(f)\leq n$ then $\dim_\ell(i\circ f)\leq n$ and $\dim_\ell(j\circ f)\leq n$. 
\end{lemma}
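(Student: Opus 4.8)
The plan is to exploit the standard localization triangle associated to the open–closed decomposition $X = U \sqcup Z$, applied after base change, and to read off the cohomological amplitude from it. Fix a map $S \to Y$ with $S$ a spatial diamond, write $f_S : X_S \to S$, $i_S : Z_S \to X_S$, $j_S : U_S \to X_S$ for the base changes, and let $\calF \in D^{\leq 0}_\et(X_S, \bbF_\ell)$. The key input is the exact triangle $j_{S,!} j_S^* \calF \to \calF \to i_{S,*} i_S^* \calF \xrightarrow{+1}$ in $D_\et(X_S, \bbF_\ell)$; since $j_S^*$ and $i_S^*$ are $t$-exact for the standard $t$-structure, both $j_S^* \calF$ and $i_S^* \calF$ lie in $D^{\leq 0}_\et$ of their respective spaces. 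Note $i_S^* = i_S^!$ up to nothing here only in the proper-pushforward sense we need: what matters is that $(i \circ f)_{S,!} = f_{S,!} \circ i_{S,*}$ and $(j \circ f)_{S,!} = f_{S,!} \circ j_{S,!}$ as functors, which is immediate from the definition of $!$-pushforward for the composites and the fact that $i$ is a closed immersion (so $i_! = i_*$).

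For the forward implication, I would apply $f_{S,!}$ to the localization triangle. This yields an exact triangle
\begin{equation*}
  (j \circ f)_{S,!}\, j_S^* \calF \longrightarrow f_{S,!} \calF \longrightarrow (i \circ f)_{S,!}\, i_S^* \calF \xrightarrow{+1}
\end{equation*}
in $D_\et(S, \bbF_\ell)$. By hypothesis $\dim_\ell(j \circ f) \leq n$ and $j_S^*\calF \in D^{\leq 0}_\et$, so the left term lies in $D^{\leq 2n}_\et(S, \bbF_\ell)$; similarly the right term lies in $D^{\leq 2n}_\et(S, \bbF_\ell)$ using $\dim_\ell(i \circ f) \leq n$. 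Since $D^{\leq 2n}_\et(S, \bbF_\ell)$ is closed under extensions, the middle term $f_{S,!}\calF$ lies in $D^{\leq 2n}_\et(S, \bbF_\ell)$ as well. As $S \to Y$ and $\calF$ were arbitrary, $\dim_\ell(f) \leq n$.

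For the converse, suppose $\dim_\ell(f) \leq n$. Given $S \to Y$ and $\calG \in D^{\leq 0}_\et(Z_S, \bbF_\ell)$, the sheaf $i_{S,*}\calG$ lies in $D^{\leq 0}_\et(X_S, \bbF_\ell)$ (again by $t$-exactness of $i_{S,*}$ for a closed immersion), so $(i \circ f)_{S,!}\calG = f_{S,!}(i_{S,*}\calG) \in D^{\leq 2n}_\et(S, \bbF_\ell)$, giving $\dim_\ell(i \circ f) \leq n$. The open case is slightly more delicate because $j_{S,!}$ is \emph{right} $t$-exact but not left $t$-exact; however, for $\calG \in D^{\leq 0}_\et(U_S, \bbF_\ell)$ we do have $j_{S,!}\calG \in D^{\leq 0}_\et(X_S, \bbF_\ell)$, which is all that is needed, so $(j \circ f)_{S,!}\calG = f_{S,!}(j_{S,!}\calG) \in D^{\leq 2n}_\et(S, \bbF_\ell)$ and $\dim_\ell(j \circ f) \leq n$. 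The main (minor) obstacle is just bookkeeping: making sure the right-exactness of $j_!$ and left/right-exactness of $i_* = i_!$ are invoked correctly so that all the relevant objects genuinely sit in $D^{\leq 0}$ before applying the dimension hypotheses, and that the localization triangle is available in this v-stack setting — but this is standard for $D_\et(-, \bbF_\ell)$ on Artin v-stacks and fine morphisms, e.g. via \cite[Theorem 1.9]{Sch17} and \cite{GHW22}.
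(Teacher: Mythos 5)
Your proof is correct and follows essentially the same route as the paper: the forward direction via applying $f_{S,!}$ to the localization triangle $j_{S,!}j_S^*\calF \to \calF \to i_{S,*}i_S^*\calF$ and using that $D^{\leq 2n}_\et$ is closed under extensions, and the converse via the (right) $t$-exactness of $i_{S,*}$ and $j_{S,!}$, which is just the explicit form of the paper's appeal to subadditivity together with the $0$-dimensionality of the fibers of $i$ and $j$.
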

\begin{proof}
	Notice that the fibers of $j$ and $i$ are $0$-dimensional. By \Cref{lemma-dimension-subaditive} the second claim follows. 
	For the first claim, let $\calF\in D_\et^{\leq 0}(X,\bbF_\ell)$, and consider the following distinguished triangle:	
	\begin{equation}
		f_!j_!j^*\calF\to f_!\calF \to f_!i_*i^*\calF \to f_!j_!j^*\calF[1]
	\end{equation}
	We may pass to geometric fibers, where one of the terms vanish.
\end{proof}

\subsection{Averages of coweights}
Let $G$ be a quasi-split reductive group over $\bbQ_p$ and let $T\subset B\subset G$ be a pair consisting of a maximal torus that is maximally $\bbQ_p$-split and a Borel both defined over $\bbQ_p$. 
Let $\Phi_G$ be the absolute root system of $G$ with respect to $T$ and $\Delta_G$ the basis of positive simple absolute roots with respect to $B$.
We let $X_*(T)$ denote the set of geometric cocharacters and denote by $X_*(T)_\bbQ$ and $X_*(T)_\bbR$ the resulting rational vector space.
We use the symbol $M$ to denote a standard Levi of $G$ defined over $\bbQ_p$, and by $\Delta_M$ the induced base of positive simple roots.

\begin{definition}
	We say that $\nu \in X_*(T)_\bbQ$ is $M$\textit{-dominant} (resp. $M$\textit{-central}) if $\langle \alpha, \nu \rangle \geq 0$ (resp. $\langle \alpha, \nu \rangle=0$) for all $\alpha \in \Delta_M$ and denote by $X_*(T)_\bbQ^{+_M}$ the convex set of $M$-dominant vectors in $X_*(T)_\bbQ$. 
\end{definition}

Following \cite{Sch22}, we now define the so called $M$\textit{-average} of $\nu$:
\begin{equation}
\mathrm{av}_M(\nu)=\frac{1}{\vert W_M \rvert}\sum_{w \in W_M } w\nu
\end{equation} 
where $W_M$ denotes the absolute Weyl group of $M$.
\begin{lemma}
The $M$-average $\mathrm{av}_M(\nu)$ is the unique $M$-central $\mu \in X_*(T)_\bbQ$ whose difference $\mu-\nu$ is spanned by $\Delta_M^\vee$.
\end{lemma}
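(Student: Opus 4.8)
The plan is to check directly that $\mathrm{av}_M(\nu)$ satisfies the two asserted properties, and then to deduce uniqueness from the non-degeneracy of the Cartan matrix of $M$.

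First I would observe that $\mathrm{av}_M(\nu)$ is $W_M$-invariant, being an average over the group $W_M$. Recall that for $\alpha\in\Delta_M$ the reflection $s_\alpha$ acts on $X_*(T)_\bbQ$ by $s_\alpha(v)=v-\langle\alpha,v\rangle\alpha^\vee$, so $s_\alpha(v)=v$ if and only if $\langle\alpha,v\rangle=0$. Since $W_M$ is generated by the $s_\alpha$ with $\alpha\in\Delta_M$, a vector in $X_*(T)_\bbQ$ is $W_M$-invariant if and only if it is $M$-central; in particular $\mathrm{av}_M(\nu)$ is $M$-central. For the second property, I would write $\mathrm{av}_M(\nu)-\nu=\tfrac{1}{|W_M|}\sum_{w\in W_M}(w\nu-\nu)$ and show that each summand lies in $\mathrm{span}_\bbQ(\Delta_M^\vee)$: expressing $w$ as a product of $k$ simple reflections from $\Delta_M$ and using the reflection formula above, an immediate induction on $k$ gives $wv-v\in\mathrm{span}_\bbQ(\Delta_M^\vee)$ for all $v$. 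Hence $\mathrm{av}_M(\nu)-\nu\in\mathrm{span}_\bbQ(\Delta_M^\vee)$, as desired.

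For uniqueness, suppose $\mu_1,\mu_2\in X_*(T)_\bbQ$ are both $M$-central with $\mu_i-\nu\in\mathrm{span}_\bbQ(\Delta_M^\vee)$. By linearity of the pairing, $v:=\mu_1-\mu_2$ is again $M$-central, and it lies in $\mathrm{span}_\bbQ(\Delta_M^\vee)$; write $v=\sum_{\alpha\in\Delta_M}c_\alpha\alpha^\vee$. Pairing with an arbitrary $\beta\in\Delta_M$ yields $0=\langle\beta,v\rangle=\sum_{\alpha\in\Delta_M}c_\alpha\langle\beta,\alpha^\vee\rangle$, i.e.\ the coefficient vector $(c_\alpha)$ lies in the kernel of the Cartan matrix of $M$. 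Since $\Delta_M\subseteq\Delta_G$ consists of linearly independent roots and $\Delta_M^\vee$ of linearly independent coroots, this Cartan matrix is invertible over $\bbQ$, so $v=0$ and $\mu_1=\mu_2$. Combining this with the first paragraph identifies $\mathrm{av}_M(\nu)$ as the unique such vector.

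This argument is entirely formal once one grants the non-degeneracy of the Cartan matrix of $M$ (equivalently, the linear independence of $\Delta_M^\vee$ together with that pairing being full rank), which is the only non-trivial input and is standard for any root system; I do not expect any real obstacle here.
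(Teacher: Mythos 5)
Your proof is correct and follows essentially the same route as the paper, whose entire argument is the observation that $\mathrm{av}_M(\nu)$ is $W_M$-invariant and that $W_M$-invariance is equivalent to $M$-centrality. You additionally spell out the two points the paper leaves implicit — that $w\nu-\nu\in\mathrm{span}_\bbQ(\Delta_M^\vee)$ via the reflection formula, and uniqueness via invertibility of the Cartan matrix of $M$ — both of which are verified correctly.
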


\begin{proof}
	Notice that $\mathrm{av}_M(\nu)$ is $W_M$-invariant by definition. Also, a vector is $W_M$-invariant if and only if it is $M$-central.
\end{proof}

It also follows that $\langle 2\rho_G-2\rho_M, \nu\rangle=\langle 2\rho_G-2\rho_M, \on{av}_M(\nu)\rangle$. We study how averaging interacts with the notion of positivity presented below.
	
\begin{definition}
	We say that $\nu \in X_*(T)_\bbQ$ is \textit{non-negative} if it belongs to the convex hull of $X_*(Z_G)_\bbQ$ and $\bbQ_{\geq 0}\alpha^\vee$, where $Z_G$ is the center of $G$ and $\alpha$ runs over $\Delta_G$. The convex set of non-negative vectors 
	is denoted by $X_*(T)_\bbQ^{\geq 0}$.
\end{definition}

Our definition above corresponds to the inequality $\nu_{\mathrm{ad}} \geq 0$ in the usual Bruhat order of $X_*(T_{\mathrm{ad}})$, where $T_{\mathrm{ad}}$ denotes the image of $T$ in the adjoint group $G_{\mathrm{ad}}$ of $G$. A dominant vector is necessarily non-negative, but the converse rarely ever holds. In the following, we note that averaging preserves non-negativity, compare with \cite[Lemma 3.1]{Sch22}.

\begin{proposition}
	\label{average-preserves-positivity}
	The function $\mathrm{av}_M$ preserves $ X_*(T)_\bbR^{\geq 0}$.
\end{proposition}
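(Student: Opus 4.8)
The plan is to use that $\mathrm{av}_M\colon X_*(T)_\bbR\to X_*(T)_\bbR$ is linear, so it commutes with forming convex hulls: $\mathrm{av}_M(\mathrm{conv}(S))=\mathrm{conv}(\mathrm{av}_M(S))$ for any subset $S$. Since $X_*(T)_\bbR^{\geq 0}=\mathrm{conv}(S)$ for $S=X_*(Z_G)_\bbR\cup\bigcup_{\alpha\in\Delta_G}\bbR_{\geq 0}\alpha^\vee$, and $X_*(T)_\bbR^{\geq 0}$ is convex, it is enough to check that $\mathrm{av}_M$ maps $S$ into $X_*(T)_\bbR^{\geq 0}$; as $X_*(T)_\bbR^{\geq 0}$ is moreover stable under non-negative scaling, this reduces to the two statements: (i) $\mathrm{av}_M(X_*(Z_G)_\bbR)\subseteq X_*(T)_\bbR^{\geq 0}$, and (ii) $\mathrm{av}_M(\alpha^\vee)\in X_*(T)_\bbR^{\geq 0}$ for every $\alpha\in\Delta_G$.

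For (i): any $\lambda\in X_*(Z_G)_\bbR$ pairs trivially with all of $\Phi_G$, hence is fixed by $W_G\supseteq W_M$, so $\mathrm{av}_M$ is the identity on $X_*(Z_G)_\bbR\subseteq X_*(T)_\bbR^{\geq 0}$. For (ii) I would split according to whether $\alpha\in\Delta_M$ or not. If $\alpha\in\Delta_M$, then $\mathrm{av}_M(\alpha^\vee)$ is $M$-central, and by the characterization of $\mathrm{av}_M$ recalled above it differs from $\alpha^\vee$ by an element of $\mathrm{span}_\bbR(\Delta_M^\vee)$, hence lies in $\mathrm{span}_\bbR(\Delta_M^\vee)$ itself; but a vector in $\mathrm{span}_\bbR(\Delta_M^\vee)$ that is $M$-central pairs trivially with all of $\Delta_M$, and since the Cartan pairing $\Delta_M\times\Delta_M^\vee\to\bbZ$ is non-degenerate this forces $\mathrm{av}_M(\alpha^\vee)=0\in X_*(T)_\bbR^{\geq 0}$. (Alternatively: $\mathrm{av}_M$ is the orthogonal projection onto the $W_M$-invariants for a $W_M$-invariant inner product, and $\alpha^\vee$ lies in its orthogonal complement.)

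The remaining, and genuinely substantive, case is $\alpha\in\Delta_G\setminus\Delta_M$. Here the crucial observation is that $W_M$ preserves $\Phi_G^+\setminus\Phi_M^+$: for a simple root $\beta\in\Delta_M$, the reflection $s_\beta$ changes only the $\beta$-coordinate of a root when it is expanded in the basis $\Delta_G$, so for any root the coordinates indexed by $\Delta_G\setminus\Delta_M$ are left unchanged by every $w\in W_M$; since $\alpha$ has such a coordinate equal to $1>0$, the root $w\alpha$ ($w\in W_M$) has a strictly positive $\Delta_G$-coordinate, hence is positive and not in $\Phi_M$. Dualizing through $w\cdot\alpha^\vee=(w\alpha)^\vee$, each $w\alpha^\vee$ is a positive coroot, therefore a non-negative integral combination of the simple coroots, so $w\alpha^\vee\in\sum_{\gamma\in\Delta_G}\bbR_{\geq 0}\gamma^\vee\subseteq X_*(T)_\bbR^{\geq 0}$. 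As $X_*(T)_\bbR^{\geq 0}$ is convex, the average $\mathrm{av}_M(\alpha^\vee)=\frac{1}{|W_M|}\sum_{w\in W_M}w\alpha^\vee$ lies in it as well, which finishes (ii) and the proof. I expect the only real obstacle to be the combinatorial claim that $W_M$ stabilizes $\Phi_G^+\setminus\Phi_M^+$ (and hence the positive coroots outside $\Phi_M^\vee$); everything else is formal manipulation of convex sets and the definitions.
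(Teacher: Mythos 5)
Your proof is correct and takes essentially the same route as the paper: reduce by linearity and convexity to the generating sets $X_*(Z_G)_\bbQ$ and $\bbQ_{\geq 0}\alpha^\vee$, observe that $M$-central elements are fixed by $\mathrm{av}_M$, and for $\alpha\in\Delta_G\setminus\Delta_M$ use that $w\alpha^\vee$ is a positive coroot for every $w\in W_M$. The only difference is that you spell out (correctly) the combinatorial fact that $W_M$ preserves the $\Delta_G\setminus\Delta_M$-coordinates of roots, which the paper asserts in one line, and you treat the trivial case $\alpha\in\Delta_M$ explicitly.
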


\begin{proof}
It suffices to see that it preserves $X_*(Z_G)_\bbQ$ and $\bbQ_{\geq 0}\alpha^\vee$. This is clear for $M$-central coweights, so it suffices to consider $\mathrm{av}_M(\alpha^\vee)$ for $\alpha \in \Delta_G \setminus \Delta_M$. But then $w\alpha^\vee$ is a positive coroot for all $w \in W_M$, thereby finishing the proof.
\end{proof}

\begin{remark}
	If $G=\mathrm{GL}_n$, we may interpret $\nu$ as a polygon and its non-negativity as meaning the polygon never crosses the straight line connecting its extremities. The vector $\mathrm{av}_M(\nu)$ corresponds to connecting vertices according to a partition of $n$. In this case, it is visually clear that this partial average polygon lies above the total average polygon, since we started with a non-negative one. 
\end{remark}
As a corollary, we get the following technical result that is relevant in the next subsection:

\begin{lemma}
	\label{combinatorics-positive-vs-dominant}
	Let $\nu \in X_*(T)^{\geq 0}_\bbQ$ be invariant under $\Gamma$ and $M$-central. There is a sequence of standard Levi subgroups $M=M_0 \subset\dots \subset M_i \subset \dots \subset M_k=G $ defined over $\bbQ_p$ and also of $\Gamma$-invariant vectors $\nu=\nu_0, \dots, \nu_i, \dots \nu_k=\on{av}_G(\nu)$ in $X_*(T)^{\geq 0}_\bbQ$ such that the following properties hold
	\begin{enumerate}
		\item $\nu_j=\mathrm{av}_{M_j}(\nu_i)$ for $j\geq i$.
		\item $\nu_i$ is $M_{i+1}$-dominant.
	\end{enumerate}
\end{lemma}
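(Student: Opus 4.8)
The plan is to build the sequences $M_i$ and $\nu_i$ recursively, at each stage enlarging the Levi by adjoining exactly those simple roots that already pair non-negatively with the current vector. Concretely, set $M_0 = M$ and $\nu_0 = \nu$. Given $M_i$ and an $M_i$-central, $\Gamma$-invariant, non-negative vector $\nu_i$, let $\Delta_{M_{i+1}}$ consist of $\Delta_{M_i}$ together with all $\alpha \in \Delta_G \setminus \Delta_{M_i}$ for which $\langle \alpha, \nu_i\rangle \geq 0$. Then define $\nu_{i+1} = \mathrm{av}_{M_{i+1}}(\nu_i)$. I would first record that $M_{i+1}$ is a standard Levi defined over $\bbQ_p$: the set $\Delta_{M_{i+1}}$ is $\Gamma$-stable because $\nu_i$ is $\Gamma$-invariant and $\Gamma$ permutes the simple roots, and it visibly contains $\Delta_{M_i}$, so the chain is increasing. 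By \Cref{average-preserves-positivity}, $\nu_{i+1}$ is again non-negative, it is $M_{i+1}$-central by the lemma characterizing the average, and it is $\Gamma$-invariant since averaging commutes with the $\Gamma$-action (as $\Gamma$ stabilizes $W_{M_{i+1}}$). Property~(2), that $\nu_i$ is $M_{i+1}$-dominant, holds by construction: for $\alpha \in \Delta_{M_i}$ we have $\langle \alpha, \nu_i\rangle = 0$ (as $\nu_i$ is $M_i$-central), and for the newly added $\alpha$ we have $\langle \alpha, \nu_i\rangle \geq 0$ by the defining property of $\Delta_{M_{i+1}}$.

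For property~(1), the key point is a transitivity statement for iterated averages: if $M \subseteq M' \subseteq M''$ are standard Levis and $\nu$ is $M$-central, then $\mathrm{av}_{M''}(\mathrm{av}_{M'}(\nu)) = \mathrm{av}_{M''}(\nu)$. This follows immediately from the averaging formula by summing over cosets of $W_{M'}$ in $W_{M''}$, or alternatively from the characterizing lemma: $\mathrm{av}_{M''}(\nu)$ is the unique $M''$-central vector differing from $\nu$ by an element of $\mathrm{span}(\Delta_{M''}^\vee)$, and since $\mathrm{av}_{M'}(\nu) - \nu \in \mathrm{span}(\Delta_{M'}^\vee) \subseteq \mathrm{span}(\Delta_{M''}^\vee)$, the vector $\mathrm{av}_{M''}(\mathrm{av}_{M'}(\nu))$ has the same two defining properties. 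Applying this with the chain $M_i \subseteq M_{i+1} \subseteq \dots \subseteq M_j$ gives $\nu_j = \mathrm{av}_{M_j}(\nu_i)$ for all $j \geq i$ by a straightforward induction on $j - i$, using that each $\nu_{\ell+1} = \mathrm{av}_{M_{\ell+1}}(\nu_\ell)$ and that $\nu_i$ is $M_i$-central so the transitivity applies at each step.

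The only remaining thing to check is termination, namely that the chain reaches $G$ and that the terminal vector is $\mathrm{av}_G(\nu)$. Since $\Delta_G$ is finite and $|\Delta_{M_{i+1}}| > |\Delta_{M_i}|$ whenever $M_i \neq G$ — because if $M_i \neq G$ and $\nu_i$ is non-negative, then at least one simple root $\alpha \in \Delta_G \setminus \Delta_{M_i}$ must satisfy $\langle \alpha, \nu_i \rangle \geq 0$ (otherwise $\nu_i$, which is $M_i$-central and hence pairs to zero with $\Delta_{M_i}$ and negatively with all other simple roots, could not lie in the convex hull defining $X_*(T)^{\geq 0}_\bbQ$; indeed pairing $\nu_i$ with $2\rho_G - 2\rho_{M_i}$ would then be strictly negative, contradicting that this pairing is a sum of the non-negative quantities $\langle \alpha, \mathrm{av}_G(\nu_i)\rangle$ — or more directly, a non-negative vector orthogonal to $\Delta_{M_i}$ and negative on the rest cannot be a non-negative combination of $X_*(Z_G)_\bbQ$ and the $\alpha^\vee$) — the process strictly increases the rank at each step and must stabilize at $M_k = G$ for some $k$. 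Finally, $\nu_k = \mathrm{av}_{M_k}(\nu_0) = \mathrm{av}_G(\nu)$ by property~(1) with $i = 0$, $j = k$. I expect the main obstacle to be the termination argument — specifically, verifying cleanly that a non-negative, $M_i$-central vector with $M_i \neq G$ always has some outside simple root pairing non-negatively with it; the pairing with $2\rho_G - 2\rho_{M_i}$ trick, combined with the identity $\langle 2\rho_G - 2\rho_{M_i}, \nu_i\rangle = \langle 2\rho_G - 2\rho_{M_i}, \mathrm{av}_G(\nu_i)\rangle$ noted after the averaging lemma, is the cleanest route, since $\mathrm{av}_G(\nu_i)$ is $G$-central hence dominant, forcing the total pairing to be $\geq 0$, and one then has to rule out the degenerate case where it is exactly $0$ separately (which forces $\nu_i$ itself to be $G$-central, i.e.\ $M_i = G$ up to the roots on which everything already vanishes).
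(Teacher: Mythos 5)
Your construction is essentially the paper's: both proofs run an induction that enlarges the Levi by adjoining simple roots pairing non-negatively with the current ($M_i$-central, non-negative, $\Gamma$-invariant) vector and then invoke \Cref{average-preserves-positivity}; the paper adjoins one $\Gamma$-orbit of a root pairing \emph{strictly} positively per step, you adjoin all roots pairing $\geq 0$ at once, which is an immaterial difference. Your discussion of transitivity of averages and of $\Gamma$-stability of $\Delta_{M_{i+1}}$ is correct and in fact more explicit than the paper's.

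The problem is in the termination step, which is the real content of the lemma. The identity $\langle 2\rho_G - 2\rho_{M_i}, \nu_i\rangle = \langle 2\rho_G - 2\rho_{M_i}, \mathrm{av}_G(\nu_i)\rangle$ you lean on is false. The identity recorded after the averaging lemma is $\langle 2\rho_G-2\rho_M,\nu\rangle = \langle 2\rho_G-2\rho_M,\mathrm{av}_M(\nu)\rangle$ with the \emph{same} Levi in both slots: it holds because $\mathrm{av}_M(\nu)-\nu$ lies in the span of $\Delta_M^\vee$, on which $2\rho_G-2\rho_M$ vanishes. By contrast $\mathrm{av}_G(\nu_i)-\nu_i$ only lies in the span of $\Delta_G^\vee$. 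Indeed, the right-hand side of your identity is always $0$ (since $\mathrm{av}_G(\nu_i)$ is $G$-central and $2\rho_G-2\rho_{M_i}$ is a sum of roots), while the left-hand side is typically nonzero (e.g.\ $G=\mathrm{PGL}_2$, $M_i=T$, $\nu_i=\tfrac12\alpha^\vee$); for the same reason, $\langle 2\rho_G-2\rho_{M_i},\nu_i\rangle$ is not "a sum of the non-negative quantities $\langle\alpha,\mathrm{av}_G(\nu_i)\rangle$", all of which vanish. The fact you assert as a fallback --- that a non-negative, $M_i$-central vector cannot pair strictly negatively with every $\alpha\in\Delta_G\setminus\Delta_{M_i}$ unless $M_i=G$ --- is exactly what the paper proves, and the one-line argument is to pair with $\rho_G$ itself: writing $\rho_G$ as a positive combination of the simple roots, your hypothesis would force $\langle\rho_G,\nu_i\rangle<0$, whereas every element of the convex hull of $X_*(Z_G)_\bbQ$ and the rays $\bbQ_{\geq 0}\alpha^\vee$ pairs $\geq 0$ with the dominant weight $\rho_G$. (Your weight $2\rho_G-2\rho_{M_i}$ also works for this purpose, but because it is itself dominant and vanishes on $\Delta_{M_i}^\vee$ --- not because of the averaging identity.) With that substitution the argument is complete.
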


\begin{proof}
	Suppose $\langle \alpha, \nu\rangle \leq 0$ for all $\alpha \in \Delta_G\setminus \Delta_M$. 
	Since $\langle \alpha, \nu \rangle =0$ for $\alpha \in \Delta_M$ by hypothesis, we also get $\langle \rho_G, \nu\rangle \leq 0$. On the other hand, the convex hull of $X_*(Z_G)_\bbQ$ and $\bbQ_{\geq 0}\alpha^\vee$ for all $\alpha \in \Delta_{G}$ pairs non-negatively with the strictly dominant weight $\rho_{G}$, and it vanishes exactly on $G$-central elements. Therefore, the only possibility would be $M=G$, in which case $k=0$. 
	
	Otherwise, there exists some $\alpha \in\Delta_{G}\setminus \Delta_M$ such that $\langle \alpha, \nu \rangle > 0$. By $\Gamma$-invariance, this holds for its entire $\Gamma$-orbit.
	Now let $L$ be the standard Levi defined over $\bbQ_p$ with $\Delta_L=\Delta_M \cup \Gamma\alpha$ and consider $\on{av}_L(\nu)$. By \Cref{average-preserves-positivity} $\mathrm{av}_L(\nu)$ is non-negative and $L$-central, which finishes the proof of the lemma by induction on the cardinality of $\Delta_G \setminus \Delta_M$.
\end{proof}

\subsection{Newton strata in $\Bun_P$.}
We let $\bbM$ denote the set of standard Levi subgroups of $G$ containing $T$. 
Let $B(\bbM)$ denote the set of pairs $\{(M,b_M)\}$ where $M\in \bbM$ and $b_M\in B(M)$. 
For all $M\in \bbM$, we have $B(M)\subset B(\bbM)$.

Fix $b\in B(\bbM)$ with $b=(M,b_M)$ and let $P=MB$ denote the standard parabolic containing $B$ and with standard Levi $M$.
We let $\nu_b\in (X_*(T)\otimes \bbQ)^\Gamma$ denote the $M$-dominant Newton point of $b_M$. 
For $M\subseteq L$ we let $P_L:=P\cap L$ and define $\Bun_{P_L}^b$ by the following diagram with Cartesian square: 
\begin{equation}
	\begin{tikzcd}
		\Bun_{P_L}^b \ar{r} \ar{d} & \Bun_{P_L} \ar{d} \ar{r} & \Bun_L \\	
		\Bun^{b_M}_M \ar{r} & \Bun_M
	\end{tikzcd}
\end{equation}
\begin{theorem}[Hamann]
	\label{Mb-is-smooth}
	The map of Artin v-stacks $\Bun_{P_L}\to \Bun_M$ is $\ell$-cohomologically smooth. In particular, $\Bun_{P_L}^b \to \Bun^{b_M}_M$ is $\ell$-cohomologically smooth. Moreover, the later map is of relative $\ell$-dimension $\langle 2\rho_L-2\rho_M,\nu_b \rangle$.
\end{theorem}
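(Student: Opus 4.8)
The plan is to reduce everything to the case $L = G$ and to an explicit local description of the fibration $\Bun_{P}\to \Bun_M$, using the fact that $\ell$-cohomological smoothness and relative dimension can be checked v-locally and fiberwise on geometric points. First I would recall from \cite{FS21} that $\Bun_P$ is an Artin v-stack and that the map $q\colon \Bun_{P}\to \Bun_M$ induced by the Levi quotient $P\twoheadrightarrow M$ is representable in nice stacks; the key structural input is that the fiber of $q$ over a point corresponding to an $M$-bundle $\calE_M$ is the moduli stack of reductions of the associated $G$-bundle $\calE_M\times^M G$ (equivalently, $P$-structures on $\calE_M\times^M G$ lifting $\calE_M$), and this fiber is an iterated affine-space bundle built out of the Fargues--Fontaine-curve cohomology of the vector bundle $\calE_M\times^M (\frakp/\frakm)$, where $\frakp/\frakm = \mathrm{Lie}(R_uP)$ is the nilradical. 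Concretely, choosing a filtration of the $M$-representation $\frakp/\frakm$ by the lower central series of $R_uP$, one writes $q$ as a composition of torsors under Banach--Colmez spaces $\calBC(\calE_M\times^M V_i)$ for the successive graded pieces $V_i$, twisted by the relevant Tate twist; each such Banach--Colmez space is $\ell$-cohomologically smooth over the base by \cite[Proposition 23.3.1, (ii) of Proposition 23.1.3]{SW20} or the corresponding statements in \cite{FS21}, and so is their composite. This gives $\ell$-cohomological smoothness of $\Bun_{P_L}\to \Bun_M$ for $L=G$, hence for general $L$ by the same argument applied to $P_L = P\cap L$ inside $L$, and then the statement for $\Bun_{P_L}^b\to \Bun_M^{b_M}$ follows by base change, since $\ell$-cohomological smoothness is stable under base change.

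For the dimension count, I would compute the relative $\ell$-dimension fiberwise. Over a geometric point $\Spa(C,C^+)\to \Bun_M$ with corresponding $M$-bundle $\calE_M$, the fiber is, up to the filtration argument above, controlled by the Euler characteristics of the bundles $\calE_M\times^M V_i$ on $X_{\mathrm{FF}}$; summing over $i$, the total relative dimension is the degree of $\calE_M\times^M (\frakp/\frakm)$ on the Fargues--Fontaine curve, which is exactly the pairing of the Newton cocharacter of $\calE_M$ with the sum of the roots occurring in $\mathrm{Lie}(R_uP)$. Since the relevant point of $\Bun_M$ lies in $\Bun_M^{b_M}$, its Newton cocharacter is $\nu_b$ (the $M$-dominant Newton point of $b_M$), and the sum of the roots in $\mathrm{Lie}(R_uP_L)$ — i.e. the roots of $L$ not in $M$ — is $2\rho_L - 2\rho_M$. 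Hence the relative $\ell$-dimension is $\langle 2\rho_L - 2\rho_M, \nu_b\rangle$, as claimed. (One should be slightly careful that a priori only the positive part of each $\calBC$ contributes positively to the dimension and the "$H^1$" part, corresponding to negative slopes, contributes nothing; but because $\nu_b$ is $M$-dominant, each $V_i$ — a sub of $\frakp/\frakm$, on which all weights pair non-negatively with $\nu_b$ — gives a semistable-of-non-negative-slope bundle after twisting by $\calE_M$, so $H^1$ vanishes and the naive Euler characteristic equals the dimension. This is where $M$-dominance of $\nu_b$ is genuinely used.)

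The main obstacle I expect is making the fibration description of $q\colon \Bun_P\to \Bun_M$ precise enough to extract both smoothness and the exact dimension: one has to know that $q$ really is (v-locally, or after the filtration) an iterated Banach--Colmez torsor rather than merely "relatively nice", and one must track Tate twists correctly through the lower central series so that the Euler characteristic bookkeeping yields precisely $\langle 2\rho_L-2\rho_M,\nu_b\rangle$ and not merely something of that form up to a correction. The cleanest route is probably to cite the relevant results of \cite{FS21} on the geometry of $\Bun_P$ and of Banach--Colmez spaces directly, and to reduce the dimension computation to the rank-one (single root space) case, where it is the statement that $\calBC(\calO(d))$ has $\ell$-dimension $d$ for $d\geq 0$ and $\calBC(\calO(d)[1])$ has $\ell$-dimension $-d$ for $d\leq 0$; summing over the root spaces of $\mathrm{Lie}(R_uP_L)$ with multiplicities given by their $\nu_b$-pairings then gives the result.
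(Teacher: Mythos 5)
Your overall strategy---exhibiting $\Bun_{P_L}\to\Bun_M$ as an iterated fibration built from Banach--Colmez spaces attached to the graded pieces of $\mathrm{Lie}(R_uP_L)$ and computing the relative $\ell$-dimension as an Euler characteristic on the Fargues--Fontaine curve---is precisely the strategy of Hamann's proof; the paper itself gives no argument and simply cites \cite[Propositions 3.16 and 4.7]{Ham22}. So the route is the intended one, but your final parenthetical contains a genuine error that would derail the dimension count.

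The weights of $\mathrm{Lie}(R_uP_L)$ are the roots of $L$ \emph{outside} $M$, and $M$-dominance of $\nu_b$ only controls pairings with roots of $M$; it says nothing about $\langle\alpha,\nu_b\rangle$ for $\alpha$ a root of $R_uP_L$. These pairings can be negative, and indeed the asserted relative dimension $\langle 2\rho_L-2\rho_M,\nu_b\rangle$ is negative in cases the paper actually uses (when $b$ is basic and anti-dominant, $\Bun_P^b\cong\Bun_G^b$ has negative $\ell$-dimension). So your claimed vanishing of $H^1$ is false in general, and the count cannot be reduced to ``only $H^0$ contributes.'' Relatedly, the fibers of $\Bun_P\to\Bun_M$ are not torsors under Banach--Colmez spaces in the naive sense: they are extension/classifying stacks of the shape $[H^1/H^0]$ for the twisted bundles $\calE_M\times^M V_i$. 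Such a quotient stack is $\ell$-cohomologically smooth with $\ell$-dimension given by (a sign times) the Euler characteristic, i.e.\ the degree, irrespective of whether $H^0$ or $H^1$ vanishes; this is also what rescues smoothness when a graded piece has a slope-zero summand, where the Banach--Colmez space itself (namely $\underline{\bbQ_p}$) is not cohomologically smooth but its classifying stack is. Once you replace the $h^0$-count by the Euler-characteristic count (and fix the sign convention relating the slopes of $\calE_{b_M}$ to $\nu_b$, which are negatives of each other in the Fargues--Scholze normalization), the bookkeeping does yield $\langle 2\rho_L-2\rho_M,\nu_b\rangle$, and the remaining reductions in your sketch (working inside $L$ to treat general $P_L$, and base change for $\Bun_{P_L}^b\to\Bun_M^{b_M}$) are fine.
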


\begin{proof}
	This follows from \cite[Proposition 3.16, Proposition 4.7]{Ham22}. 
\end{proof}
When $M\subseteq L$ we let $i_L(b)$ denote the pair $(L,b_M)$ and $a_L(b)$ denote the pair $(L,b_L)$ where $b_L$ is the unique basic element in $B(L)$ with the same image image under the Kottwitz map, i.e.~ with $\kappa_L(b_{L})=\kappa_L(b_M)$. 
One verifies that $\mathrm{av}_L(\nu_b)=\nu_{a_L(b)}$, and consequently: \[\langle 2\rho_L-2\rho_M,\nu_b\rangle= \langle 2\rho_L-2\rho_M,\nu_{a_L(b)}\rangle.\]

\begin{definition}
	\label{definition-postivie-dom-etc}	
	We say that $b\in B(\bbM)$ is \textit{basic} if $b_M\in B(M)$ is basic. 
	We say that $b\in B(\bbM)$ is \textit{dominant} if $\nu_b$ is $G$-dominant.
	We say that $b\in B(\bbM)$ is \textit{non-negative} if $\nu_b$ lies in the monoid generated by $X_*(Z_G)_\bbQ^\Gamma$ and $\bbQ_{\geq 0} \alpha^\vee$ for every $\alpha \in \Delta_G$. 
\end{definition}

Notice that if $b$ is basic and dominant then $\Bun_P^b=\calM_b$, the Fargues--Scholze chart attached to $i_G(b)\in B(G)$ \cite[\S V.3]{FS21}. 
Also, if $b$ is basic and anti-dominant $\Bun_P^b\to \Bun_G$ induces an isomorphism $\Bun_P^b\cong \Bun_G^b$.

Let $g\in B(\bbM)$ with $g=(L_g,g_L)$ and $M\subseteq L_g$. 
Let $L_g\subseteq L$ and let 
\begin{equation}
	\label{doubly-indexed-strata}
\Bun_{P_L}^{(b,g)}:=\Delta^{-1}_{M,L_g}(\Bun^{b_M}_M\times \Bun^{g_L}_{L_g})\subseteq \Bun_{P_L}
\end{equation}
Here $\Delta_{M,L_g}:\Bun_{P_L}\to \Bun_M\times \Bun_{L_g}$ is the composition of $\Bun_{P_L}\to \Bun_{P_{L_g}}$ and the diagonal map $\Bun_{P_{L_g}}\to \Bun_M\times \Bun_{L_g}$.
\begin{proposition}
	\label{proposition-bounding-dimensions-Mb}
	If $b\in B(\bbM)$ is basic and non-negative, then $\Bun_P^b$ contains an open subspace $\calT_b\subset \Bun_P^b$ such that $f_b:\calT_b\to \mathrm{Bun}_G$ is $\ell$-cohomologically smooth of relative dimension $\langle 2 \rho_G-2\rho_M,\nu_b\rangle$. Moreover, $f_b$ factors through $\Bun^{a_G(b)}_G$ and $\dim_\ell(\Bun_P^b\setminus \calT_b)< \langle 2 \rho_G-2\rho_M,\nu_b\rangle$.   
\end{proposition}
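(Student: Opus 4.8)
The plan is to induct on the cardinality of $\Delta_G \setminus \Delta_M$, using \Cref{combinatorics-positive-vs-dominant} to reduce the non-negative case to the dominant case. In the base case $b$ is basic and dominant (equivalently $M = G$, or more generally $\nu_b$ is already $G$-dominant): here $\Bun_P^b = \calM_b$ is the Fargues--Scholze chart, and by the cited results of \cite{FS21} together with \Cref{Mb-is-smooth} the natural map $\Bun_P^b \to \Bun_G^{i_G(b)} \subset \Bun_G$ factors through $\Bun_G^{a_G(b)}$ (since $b$ basic forces $i_G(b) = a_G(b)$) and is $\ell$-cohomologically smooth; indeed $\Bun_P \to \Bun_M$ is $\ell$-cohomologically smooth of relative dimension $\langle 2\rho_G - 2\rho_M, \nu_b\rangle$ by \Cref{Mb-is-smooth}, and $\Bun_M^{b_M} \to \Bun_M$ is an open immersion when $b_M$ is basic. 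So in the base case we may take $\calT_b = \Bun_P^b$, and there is nothing to remove.

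\textbf{Inductive step.} Suppose $b = (M, b_M)$ is basic and non-negative but $\nu_b$ is not $G$-dominant. Applying \Cref{combinatorics-positive-vs-dominant} to $\nu := \nu_b$ (which is $\Gamma$-invariant, $M$-central since $b_M$ is basic, and non-negative), we obtain a standard Levi $L$ with $M \subsetneq L$ defined over $\bbQ_p$ such that $\nu_b$ is $L$-dominant, together with the average $\nu_{a_L(b)} = \on{av}_L(\nu_b)$, which is again $\Gamma$-invariant, $L$-central, and non-negative, with strictly smaller defect $\#(\Delta_G \setminus \Delta_L) < \#(\Delta_G \setminus \Delta_M)$. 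Consider the factorization $\Bun_P^b \to \Bun_{P_L}^b \to \Bun_P^{a_L(b)} \to \Bun_G$, where the first map sits in the Cartesian diagram defining $\Bun_{P_L}^b$ and the composite $\Bun_{P_L}^b \to \Bun_L$ lands in $\Bun_L^{b_M}$. The key geometric input is that since $\nu_b$ is $L$-dominant, $\Bun_{P_L}^b \to \Bun_L^{b_M}$ is $\ell$-cohomologically smooth of relative dimension $\langle 2\rho_L - 2\rho_M, \nu_b \rangle$ by \Cref{Mb-is-smooth}, and moreover $\Bun_L^{b_M}$ maps to $\Bun_L$ in a way closely related to $\calM_{b_M}^L$; comparing with the $L$-chart and the open substack $\calT_{a_L(b)} \subset \Bun_P^{a_L(b)}$ provided by the inductive hypothesis, one defines $\calT_b \subset \Bun_P^b$ as the preimage of $\calT_{a_L(b)}$ inside the $\ell$-cohomologically smooth locus over $\Bun_L^{b_M}$. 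Then $f_b \colon \calT_b \to \Bun_G$ factors through $\Bun_G^{a_G(b)}$ (since $a_G(a_L(b)) = a_G(b)$, both being the basic class in $B(G)$ with the same Kottwitz invariant as $b_M$) and, by \Cref{lemma-dimension-subaditive} applied to the composition of $\ell$-cohomologically smooth maps, is $\ell$-cohomologically smooth of relative dimension
\begin{equation*}
	\langle 2\rho_L - 2\rho_M, \nu_b \rangle + \langle 2\rho_G - 2\rho_L, \nu_{a_L(b)} \rangle = \langle 2\rho_G - 2\rho_M, \nu_b \rangle,
\end{equation*}
where the identity uses $\langle 2\rho_G - 2\rho_L, \nu_b \rangle = \langle 2\rho_G - 2\rho_L, \on{av}_L(\nu_b)\rangle$ from the discussion after \Cref{Mb-is-smooth} (equivalently, after \Cref{average-preserves-positivity}).

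\textbf{The dimension bound on the complement.} It remains to bound $\dim_\ell(\Bun_P^b \setminus \calT_b)$. Stratify the complement: a point of $\Bun_P^b \setminus \calT_b$ either maps into $\Bun_L^{b_M} \setminus (\text{image of } \calM_{b_M}^L)$ — i.e.\ the $L$-bundle obtained by pushout is \emph{not} semistable, so lies in a Newton stratum $\Bun_L^{g_L}$ with $g_L$ basic of defect, hence $\langle 2\rho_L, \nu_{g_L}\rangle > 0$ — or else it lies over $\calT_{a_L(b)}$'s complement inside $\Bun_P^{a_L(b)}$, whose $\ell$-dimension is $< \langle 2\rho_G - 2\rho_L, \nu_{a_L(b)}\rangle$ by induction. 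Using the doubly-indexed strata $\Bun_{P_L}^{(b,g)}$ of \eqref{doubly-indexed-strata} and \Cref{lemma-filtering-dimension} to reduce to each stratum separately, one estimates the dimension of each piece by combining \Cref{Mb-is-smooth} (for the smooth $\Bun_{P_L}^{(b,g)} \to \Bun_L^{g_L}$ contribution, where the relative dimension drops because $\nu_b$ fails to be $L_g$-dominant on the larger Levi, or the strata have negative virtual dimension) with the inductive dimension bound over $\Bun_G$; a Newton-point bookkeeping shows that the ``lost'' dimension is strictly positive in each stratum. I expect the main obstacle to be precisely this last step: correctly setting up the stratification of $\Bun_P^b \setminus \calT_b$ in terms of the $(b,g)$-strata and verifying, via \Cref{lemma-dimension-subaditive} and \Cref{lemma-fibers-bound-dimension}, that every stratum has $\ell$-dimension strictly less than $\langle 2\rho_G - 2\rho_M, \nu_b\rangle$ — this requires carefully tracking how relative dimensions of $\Bun_{P_L} \to \Bun_M$ behave when the target Newton point is not dominant, and invoking $b \in B(G,\mu)$-type positivity (here encoded as non-negativity) to ensure the semistable locus is dense.
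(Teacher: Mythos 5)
Your overall strategy --- induction on $\#(\Delta_G\setminus\Delta_M)$ via \Cref{combinatorics-positive-vs-dominant}, passing through the intermediate Levi $L$ and pulling back $\calT_{a_L(b)}$ --- is the same as the paper's, and your dimension count for the smooth locus is correct. But there are two genuine gaps.

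First, the base case is wrong. When $b$ is basic and dominant you take $\calT_b=\Bun_P^b$ and assert that $\Bun_P^b\to\Bun_G$ factors through $\Bun_G^{i_G(b)}$ with $i_G(b)=a_G(b)$. Neither claim holds: the pushout to $G$ of a $P$-bundle whose $M$-pushout is $\calE_{b_M}$ depends on the extension data and ranges over several Newton strata (this is exactly why $\calM_{i_G(b)}\to\Bun_G$ is a \emph{chart}, i.e.\ a smooth map onto an open neighbourhood of the stratum $\Bun_G^{i_G(b)}$, not a map into a single stratum); and $i_G(b)$ is basic in $B(G)$ only when $\nu_b$ is $G$-central, so in general $i_G(b)\neq a_G(b)$. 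The correct choice in the base case is $\calT_b=\Bun_P^{(b,a_G(b))}$, the preimage of the basic stratum $\Bun_G^{a_G(b)}$; the complement is then controlled because the other strata $\Bun_G^g$ hit by the smooth map of relative dimension $\langle 2\rho_G-2\rho_M,\nu_b\rangle$ are non-basic and satisfy $\dim_\ell(\Bun_G^g)<0$. If the factorization held on all of $\Bun_P^b$ as you claim, there would be no need for $\calT_b$ at all.

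Second, the bound on $\dim_\ell(\Bun_P^b\setminus\calT_b)$ in the inductive step is not actually established; you describe a stratification and then flag it as the main obstacle. The missing argument is short: decompose $\Bun_P^b\setminus\calT_b$ as the union of $\Bun_P^{(b,a_L(b))}\setminus\calT_b$ and $\Bun_P^b\setminus\Bun_P^{(b,a_L(b))}$ and apply \Cref{lemma-filtering-dimension}. The first piece maps smoothly onto $\Bun_Q^{a_L(b)}\setminus\calT_{a_L(b)}$, which has small dimension by induction. For the second, each stratum $\Bun_P^{(b,g)}$ with $g\in B(L)$, $g\neq a_L(b)$, maps smoothly to $\Bun_L^g$ with relative dimension exactly $\langle 2\rho_G-2\rho_M,\nu_b\rangle$ --- using $\langle 2\rho_G-2\rho_L,\nu_g\rangle=\langle 2\rho_G-2\rho_L,\nu_b\rangle$ because $\kappa_L(g)=\kappa_L(b)$ --- and since $a_L(b)$ is the unique basic class with this Kottwitz invariant, such $g$ is non-basic and $\dim_\ell(\Bun_L^g)<0$. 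No appeal to ``$b\in B(G,\mu)$-type positivity'' or to density of a semistable locus is needed here; non-negativity of $b$ enters only through \Cref{combinatorics-positive-vs-dominant} and \Cref{average-preserves-positivity} to keep the induction running.
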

\begin{proof}
	We do this by induction on the cardinality of $\Delta_G \setminus \Delta_M$. If $b$ is basic and dominant then $\Bun_P^b\to \Bun_G$ is smooth by \cite[Theorem V.3.7]{FS21} and with notation as in \cref{doubly-indexed-strata} $\calT_b=\Bun_P^{(b,a_G(b))}$ satisfies the desired properties.
We may choose $L=M_1$ as in the statement of \Cref{combinatorics-positive-vs-dominant}. 
We let $Q \subset G$ denote the parabolic generated by $L$ and $B$ and we let $P_L=L\cap P$.
We have the following commutative diagram with Cartesian squares: 
\begin{equation}
	\begin{tikzcd}
		\Bun_P^b \ar{r} \ar{d} & \Bun_P \ar{r} \ar{d} & \Bun_Q \ar{r} \ar{d} & \Bun_G  \\
		\Bun_{P_L}^{b}	\ar{r} \ar{d}& \Bun_{P_L} \ar{r} \ar{d}& \Bun_L  \\
		\Bun^{b_M}_M	\ar{r}  & \Bun_M
	\end{tikzcd}
\end{equation}
After pullback by $\Bun_L^{b_L}\to \Bun_L$, and by induction, we get a commutative diagram in which $\calT_b$ is defined so that all squares are Cartesian:
\begin{equation}
	\begin{tikzcd}
		\calT_b \ar{r} \ar{d} & \Bun_{P}^{(b,a_L(b))} \ar{r} \ar{d} & \Bun_P^b \ar{r} \ar{d} & \Bun_{P_L}^b \ar{d} \\	
		\calT_{a_L(b)} \ar{r} & \Bun_Q^{a_L(b)} \ar{r} & \Bun_Q \ar{r} & \Bun_L 
	\end{tikzcd}
\end{equation}
By induction, the map $\calT_{a_L(b)}\to \Bun_G$ is $\ell$-cohomologically smooth 
and $\calT_b\to \calT_{a_L(b)}$ is also $\ell$-cohomologically smooth, so the same holds for their composition. The claim on dimensions follow since $\Bun_G \to \ast$ is $\ell$-smooth of dimension $0$ and $\Bun_P^b \to \ast$ is $\ell$-smooth of dimension $\langle 2\rho_G-2\rho_M, \nu_b\rangle$.

For the second claim, let $g\in B(L)$ be in the image of $\Bun_{P_L}^b$. We get a smooth map $\Bun_P^{(b,g)}\to \Bun_Q^g$ of $\ell$-dimension $\langle 2\rho_L- 2\rho_M,\nu_b \rangle$. 
By \Cref{Mb-is-smooth}, the map $\Bun_Q^g\to \Bun^g_L$ is smooth and it has $\ell$-dimension $\langle 2\rho_G- 2\rho_L,\nu_g \rangle$. 
Now, $\langle 2 \rho_G-2\rho_L,\nu_{g}\rangle=\langle 2 \rho_G-2\rho_L,\nu_{b}\rangle$, since $\kappa_L(b)=\kappa_L(g)$. In particular, $\Bun_P^{(b,g)}\to \Bun_Q^g\to \Bun_L^g$ is smooth of relative dimension $\langle2\rho_G-2\rho_M,\nu_b\rangle$. 
Now, when $g\neq a_L(b)$, $\dim_\ell(\Bun^g_L)<0$, so that by \Cref{lemma-dimension-subaditive} and \Cref{lemma-filtering-dimension} $\dim_\ell(\Bun_P^{(b,g)})<\langle 2 \rho_G-2\rho_M,\nu_b\rangle$ and $\dim_\ell(\Bun_P^b\setminus \Bun_P^{(b,a_L(b))}) <\langle 2 \rho_G-2\rho_M,\nu_b\rangle$.
By induction, $\dim_\ell(\Bun_P^{(b,a_L(b))}\setminus \calT_b)<\langle 2 \rho_G-2\rho_M,\nu_b\rangle$ since $\Bun_P^{(b,a_L(b))}\setminus \calT_b\to \Bun_Q^{a_L(b)}\setminus \calT_{a_L(b)}$ is smooth.
By \Cref{lemma-filtering-dimension}, $\dim_\ell(\Bun_P^{b}\setminus \calT_b)<\langle 2 \rho_G-2\rho_M,\nu_b\rangle$, since we have an open and closed decomposition $\Bun_P^{b}\setminus \calT_b=(\Bun_P^{(b,a_L(b))}\setminus \calT_b) \cup  (\Bun_P^{b}\setminus \Bun_P^{(b,a_L(b))})$. 
\end{proof}

\section{$\Gr^b_{G,\mu}$ is connected}
Contrary to the previous section we will momentarily not assume that $G$ is quasi-split.
Fix $C$ an algebraically closed non-Archimedean field extension of $\breve{E}$ and recall the Beauville--Laszlo map from the introduction
\begin{equation}
	\calB\calL_b:\Gr_{G,\mu}\to \Bun_{G},
\end{equation}
where we base change the affine Grassmannian to $\Spd C$.
Observe that $\calB\calL_b$ factors through the unique connected component of $\Bun_{G}$ parametrized by $\mu^\natural-\kappa_G(b)\in \pi_1(G)_\Gamma$.
We formulate \Cref{mainthm} as follows: 

\begin{theorem}
	If $b\in B(G,\mu)$, then $\Gr^{b}_{G,\mu}$ is dense in $\Gr_{G,\mu}$ and connected. 	
\end{theorem}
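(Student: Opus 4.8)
The plan is to reduce the general (possibly non-quasi-split) case to the quasi-split one, then to run the strategy sketched in the introduction: replace $\Gr_{G,\mu}$ by a dense open cell, push along a parabolic-level Beauville--Laszlo map, and use the dimension bounds of \Cref{proposition-bounding-dimensions-Mb} together with the connectedness criterion from \cite{Han21}. First I would dispose of the density claim: since $\Gr_{G,\mu}$ is irreducible (it has a dense open Schubert cell $L^+P\cdot \xi^\mu$, or more precisely is covered by orbit closures with a unique top one) and $\Gr^b_{G,\mu}$ is non-empty open by \cite[Corollary 22.5.1, Proposition 24.1.2]{SW20}, density is automatic; so the real content is connectedness, and by the same token it suffices to prove that $\Gr^b_{G,\mu}\cap (L^+P\cdot \xi^\mu)$ is connected for a suitable dense open subset, because a dense connected subset of a space forces the whole space to be connected.

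Next I would handle the reduction to quasi-split $G$. After base change to $C$ algebraically closed, the inner form issue can be absorbed: one passes to the quasi-split inner form $G^*$, noting that $\Bun_G$, $\Bun_{G^*}$ and the relevant affine Grassmannians, admissible loci, and the number of connected components only depend on the inner form class over $\breve{\bbQ}_p$ (equivalently one can twist $b$ so that $G$ becomes its own quasi-split inner form, or use that $\Gr_{G,\mu}$ and $\Gr_{G^*,\mu}$ become isomorphic after this twisting, compatibly with the maps to $\Bun$). I would also dispose of the basic case separately — it is the unpublished Hansen--Weinstein result cited in the introduction, and it serves as the base of the induction — so we may assume $b$ is non-basic and let $P=MB$ be the standard parabolic attached to the Newton point $\nu_b$, with $M$ its Levi.

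The core of the argument is then the following. On the dense open $X:=L^+P\cdot\xi^\mu\subseteq\Gr_{G,\mu}$, Beauville--Laszlo descent identifies $X$ with a space of $P$-bundle modifications of the Harder--Narasimhan reduction $\calE_b^P$, giving a factorization $\calB\calL_b|_X\colon X\xrightarrow{\calB\calL_{P,b}}\Bun_P\to\Bun_G$. The map $\calB\calL_{P,b}$ lands in a single connected component $\Bun_P^\kappa$ determined by $\mu-\nu_b$, and by \Cref{lemma-fibers-have-same-dim} its non-empty fibers are all $\Aut_{\on{Fil}}(\calE_b)$-torsors, hence of one fixed dimension; moreover $X$ is a smooth partially proper (locally spatial) diamond over $\Spa C$, and it is connected since $L^+P\cdot\xi^\mu$ is an iterated affine-space-type bundle over a point. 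Set $Y:=\Bun_P^\kappa\setminus\calB\calL_b^{-1}(\Bun_G^1)$ and $Z:=\calB\calL_{P,b}^{-1}(Y)\subseteq X$, with complementary open $U:=X\cap\calB\calL_b^{-1}(\Bun_G^1)=\Gr^b_{G,\mu}\cap X$. Because $\calB\calL_{P,b}$ has equidimensional fibers, $\dim Z<\dim X$ will follow once we know $\dim_\ell Y<\dim_\ell \Bun_P^\kappa$; then \cite[Corollary 4.11]{Han21} gives that $U$ is connected, and hence $\Gr^b_{G,\mu}$ is connected, completing the proof. The one remaining input is exactly the dimension estimate $\dim_\ell Y<\dim_\ell\Bun_P^\kappa$: here one writes $Y$ as a union of the Newton strata $\Bun_P^{(b',g)}$ cut out on $\Bun_P^\kappa$ and observes that the component $\Bun_P^\kappa$ itself, via the smooth map $\Bun_P\to\Bun_M$ of \Cref{Mb-is-smooth}, has $\ell$-dimension governed by $\langle 2\rho_G-2\rho_M,\nu_b\rangle$ over the basic stratum $\Bun_M^{b_M}$; the key point — and the main obstacle — is to show that the \emph{bad} part $Y$, i.e.\ the locus where the modified $G$-bundle is non-trivial, has strictly smaller dimension. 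This is precisely where $b\in B(G,\mu)$ enters: the Mazur inequality $\mu^\diamond-\nu_b\in\bbQ_{\geq 0}\Phi_G^+$ guarantees that the relevant $b_M\in B(M)_{\on{basic}}$ is non-negative in the sense of \Cref{definition-postivie-dom-etc}, so \Cref{proposition-bounding-dimensions-Mb} applies and yields an open $\calT_b\subseteq\Bun_P^b$, smooth over $\Bun_G$ of the expected relative dimension and factoring through $\Bun_G^{a_G(b)}$, with $\dim_\ell(\Bun_P^b\setminus\calT_b)<\langle 2\rho_G-2\rho_M,\nu_b\rangle$; combining this with the fact that $\calT_b$ maps to the \emph{non-trivial} component $\Bun_G^{a_G(b)}\neq\Bun_G^1$ (as $b$ is non-basic) shows that the preimage of $\Bun_G^1$ in $\Bun_P^b$ is contained in the small locus $\Bun_P^b\setminus\calT_b$, giving the desired strict inequality after translating back along $\calB\calL_{P,b}$. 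I expect the bookkeeping identifying $X$ with modifications of $\calE_b^P$ and tracking the equidimensionality of $\calB\calL_{P,b}$-fibers to be the most delicate non-formal part, with the dimension estimate itself reduced to the already-proven \Cref{proposition-bounding-dimensions-Mb}.
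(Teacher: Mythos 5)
Your overall architecture coincides with the paper's: reduce to quasi-split $G$, replace $\Gr_{G,\mu}$ by the dense cell $L^+P\cdot\xi^\mu$, factor the Beauville--Laszlo map through $\Bun_P$, control fibers via \Cref{lemma-fibers-have-same-dim}, and feed the Newton-strata bounds of \Cref{proposition-bounding-dimensions-Mb} into a connectedness-via-codimension criterion. However, the final step --- the only place where \Cref{proposition-bounding-dimensions-Mb} is actually invoked --- is logically inverted. You apply the proposition to ``$\Bun_P^{b}$'', assert that $\calT_b$ maps to the \emph{non-trivial} component $\Bun_G^{a_G(b)}\neq \Bun_G^1$, and conclude that the preimage of $\Bun_G^1$ lies in the small locus $\Bun_P^b\setminus\calT_b$. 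If that were so, the \emph{admissible} locus would have small dimension and its complement would be dense --- the opposite of the theorem. The source of the confusion is that two different elements of $B(M)$ are in play: the HN-reduction $b_M$ of $b$ (indexing the stratum containing $\calE_b^P$; its Newton point is $G$-antidominant, hence \emph{not} non-negative, so the proposition does not apply to it), and the complementary basic element $\mathrm{d}^M_{\mu,b}$ with $\kappa_M(\mathrm{d}^M_{\mu,b})=\mu^\natural-\kappa_M(b_M)$, which indexes the open stratum of $\Bun_P$ through which the modified bundles generically pass. It is $\mathrm{d}^M_{\mu,b}$ that is basic and non-negative --- this is exactly where $b\in B(G,\mu)$ enters, via $\nu_{\mathrm{d}^M_{\mu,b}}=\mathrm{av}_M(\mu^\diamond-\nu_b)$ --- and its open $\calT_{\mathrm{d}^M_{\mu,b}}$ maps smoothly, with the full expected dimension, onto the \emph{trivial} stratum $\Bun_G^{a_G(\mathrm{d}^M_{\mu,b})}=\Bun_G^{b_\mu}=\Bun_G^1$. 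The correct conclusion is that the preimage of each \emph{non-trivial} stratum $\Bun_G^g$, $g\neq b_\mu$, is contained in the union of $\Bun_P^{\mathrm{d}^M_{\mu,b}}\setminus\calT_{\mathrm{d}^M_{\mu,b}}$ with the non-basic strata of $\Bun_P$, all of $\ell$-dimension strictly less than $\langle 2\rho_G-2\rho_M,\mathrm{av}_M(\mu^\diamond-\nu_b)\rangle$; adding $\langle 2\rho_M,\mu\rangle$ for $\Gr^\circ_{M,\mu}$ and the fiber dimension $\langle 2\rho_G-2\rho_M,\nu_b\rangle$ then gives the bound $<\langle 2\rho_G,\mu\rangle=d$.

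Two further points you should repair. First, passing to the quasi-split inner form replaces ``$\calE$ trivial'' by ``$\calE\cong\calE_{b_\mu}$'' for a possibly non-trivial basic $b_\mu$, which is why the paper proves the stronger \Cref{thm-quasi-split} about $\Gr_{G,\mu}^{(b_\mu,b)}$ for all $b\in A_Z(G,\mu)$; insisting on $\Bun_G^1$ throughout, as you do, makes the trivial/non-trivial bookkeeping above unworkable after the twist. Second, the equidimensionality you invoke is not that of the fibers of $\calB\calL_{P,b}\colon X\to\Bun_P$ (those are torsors under all of $A^{-1}P(B_e)A$, not finite-dimensional); \Cref{lemma-fibers-have-same-dim} concerns the map $\Gr^{\mathrm{d}^M_{\mu,b}}_{P,\mu}\to\Gr^\circ_{M,\mu}\times_{\Bun_M}\Bun_P^{\mathrm{d}^M_{\mu,b}}$, whose non-empty fibers are $\on{Aut}^{\mathrm{unip}}_{\on{Fil}}(\calE_b)$-torsors of dimension $\langle 2\rho_G-2\rho_M,\nu_b\rangle$, so the dimension count must be routed through this fiber product (via \Cref{lemma-fibers-bound-dimension}). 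Finally, no unpublished input is needed for the basic case: it follows from the same dimension count, using that $[\underline{G(\bbQ_p)}\backslash\Gr^\circ_{G,\mu}]\to\Bun_G$ is smooth of relative dimension $d$ by \cite{FS21} and that $\dim_\ell\Bun_G^g<0$ for non-basic $g$.
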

Without loss of generality we may assume that $G$ is adjoint. 
Moreover, we may replace $G$ by its quasi-split inner form $G^*$, which is now a pure inner form by adjointness of $G$. In total, we may assume that $G$ is quasi-split, at the expense of having to prove the more general \Cref{thm-quasi-split} below.
 
Let us recall the setup. Let $T\subset B\subset G=G^*$ be as in the previous section.
We define an element $\mu^\diamond\in X_*(T)^\Gamma_\bbQ$ given by the formula:
\begin{equation}
\mu^\diamond\coloneqq\frac{1}{[\Gamma:\Gamma_\mu]}\sum_{\gamma\in \Gamma/\Gamma_\mu} \gamma(\mu),
\end{equation}
where $\Gamma_\mu$ denotes the stabilizer of $\mu$ for the $\Gamma$-action. Notice that $\langle 2\rho_G, \mu^\diamond \rangle= \langle 2\rho_G, \mu \rangle$, because $\rho_G$ is $\Gamma$-invariant.

Let $A_Z(G,\mu)\subset B(G)$ be the set of acceptable elements modulo center, i.e.~ for which $\mu^\diamond-\nu_b$ is non-negative as in \Cref{definition-postivie-dom-etc}. This is related to the notion of acceptable elements $A(G,\mu)$ of \cite[Definition 2.3]{RV14}, in the sense that $A_Z(G,\mu)$ equals the pre-image of $A(G_\mathrm{ad},\mu_\mathrm{ad})$ along $B(G)\to B(G_\mathrm{ad})$.

If $b\in B(M)$, we let $\mathrm{d}_{\mu,b}^M$ denote the unique basic element in $B(M)$ such that $\kappa_M(\mathrm{d}_{\mu,b}^M)=\mu^\natural-\kappa_M(b)$.
When $M=G$ we simply write $b_\mu$ for $\mathrm{d}_{\mu,b}^G$.
Let $d=\dim_\ell(\Gr_{G,\mu})=\langle 2\rho_G, \mu\rangle$ and let $\Gr_{G,\mu}^{(g,b)}:=\calB\calL_b^{-1}(\Bun^{g}_G)\subset \Gr_{G,\mu}$.
For example, $\Gr_{G,\mu}^b=\Gr_{G,\mu}^{(1,b)}$.
\begin{theorem}
	\label{thm-quasi-split}
	If $b\in A_Z(G,\mu)$, then $\Gr_{G,\mu}^{(b_\mu,b)}$ is dense in $\Gr_{G,\mu}$ and connected.
\end{theorem}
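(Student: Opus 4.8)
The plan is to run a fibered-connectedness argument over $\Bun_G$, using the dimension bounds of \Cref{proposition-bounding-dimensions-Mb} to control the complement of the admissible locus. First I would reduce to the case where $b$ is basic: using the Harder--Narasimhan reduction, write the standard parabolic $P=MB$ attached to $\nu_b$ and replace $\Gr_{G,\mu}$ by the dense open $L^+P\cdot\xi^\mu$, which Beauville--Laszlo descent identifies with the space of modifications of the $P$-reduction $\calE_b^P$. This gives a factorization $\calB\calL_b\colon L^+P\cdot\xi^\mu \xrightarrow{\calB\calL_{P,b}} \Bun_P \to \Bun_G$, and $\calB\calL_{P,b}$ lands in the connected component $\Bun_P^\kappa$ determined by $\mu-\nu_b$; its nonempty fibers are torsors under $\mathrm{Aut}_{\mathrm{Fil}}(\calE_b)$ (the analogue of \Cref{lemma-fibers-have-same-dim}), hence all of the same dimension. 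So connectedness of $\Gr_{G,\mu}^{(b_\mu,b)}$ will follow once we know (a) $L^+P\cdot\xi^\mu$ is connected, which is standard since it is an iterated affine-space bundle over $\Gr_{M,\mu_M}$-type pieces, and (b) the preimage in $L^+P\cdot\xi^\mu$ of the semistable locus is the complement of something of strictly smaller dimension, so that \cite[Corollary 4.11]{Han21} applies.

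For step (b) the key is to bound $\dim_\ell\big(\Bun_P^\kappa \setminus \calB\calL_b^{-1}(\Bun_G^1)\big)$. Here I would stratify $\Bun_P^\kappa$ by the $\Bun_P^{(b,g)}$ of \cref{doubly-indexed-strata}, i.e.\ by the image $g\in B(G)$ of the underlying $G$-bundle. The point is that the open stratum corresponds to $g=b_\mu$ (equivalently, the semistable locus, since $\mu^\natural-\kappa_G(b)$ is the relevant class), and I claim that $\dim_\ell(\Bun_P^\kappa) = \langle 2\rho_G,\mu\rangle - \langle 2\rho_G-2\rho_M,\nu_b\rangle$ by an averaging computation together with \Cref{Mb-is-smooth}, while each stratum with $g\neq b_\mu$ has strictly smaller $\ell$-dimension. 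To see the latter, note that $\mathrm{d}^G_{\mu,b}=b_\mu$ is the basic element with $\kappa_G=\mu^\natural-\kappa_G(b)$, and because $b\in A_Z(G,\mu)$ the coweight $\mu^\diamond-\nu_b$ is non-negative; thus the relevant basic element $b_M' \in B(M)$ (with $\kappa_M(b_M')=\mu^\natural-\kappa_M(b)$) has Newton point $\nu_{b_M'}$ lying in $\bbQ_{\geq 0}\Phi_G^+$ modulo center, so \Cref{proposition-bounding-dimensions-Mb} applies: $\Bun_P^{b_M'}$ contains an open $\calT$ smooth over $\Bun_G$ of the expected relative dimension with complement of strictly smaller dimension, and $\calT$ factors through $\Bun_G^{a_G(b_M')}=\Bun_G^{b_\mu}$. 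Translating through the torsor fibers of $\calB\calL_{P,b}$ (all equidimensional) and through \Cref{lemma-dimension-subaditive}, \Cref{lemma-filtering-dimension}, the preimage of any non-semistable $g$ contributes dimension $<\dim_\ell(L^+P\cdot\xi^\mu)$, as does the preimage of the non-open part of the semistable stratum.

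Finally, density of $\Gr_{G,\mu}^{(b_\mu,b)}$ in $\Gr_{G,\mu}$: since $L^+P\cdot\xi^\mu$ is dense in $\Gr_{G,\mu}$ and the admissible-type locus is open and its complement inside $L^+P\cdot\xi^\mu$ has strictly smaller dimension in a smooth ambient space, the locus is dense there, hence in all of $\Gr_{G,\mu}$; one also needs that it is nonempty, which follows as $\calT\neq\emptyset$ and $\calB\calL_{P,b}$ is surjective onto its image connected component. I expect the main obstacle to be the bookkeeping in step (b): correctly matching the Newton-stratum index $g$ with the open stratum $b_\mu$, verifying that the "expected" dimension of $\Bun_P^\kappa$ really equals $\langle 2\rho_G,\mu\rangle$ minus the fiber dimension, and checking that the $\mathrm{Aut}_{\mathrm{Fil}}$-torsor fibers interact correctly with the $\ell$-cohomological dimension formalism (so that passing between $L^+P\cdot\xi^\mu$ and $\Bun_P$ preserves the strict inequalities). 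The basic case $b$ basic itself — where $P=G$ and the statement is essentially the Hansen--Weinstein result — should be folded in as the base of the induction on $|\Delta_G\setminus\Delta_M|$ that is already packaged in \Cref{proposition-bounding-dimensions-Mb}.
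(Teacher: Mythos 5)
Your proposal follows essentially the same route as the paper: restrict to the dense open $L^+P\cdot\xi^\mu$ for the Harder--Narasimhan parabolic of $\nu_b$, factor $\calB\calL_b$ through $\Bun_P$, use the equidimensional unipotent-automorphism torsor structure of the fibers together with \Cref{proposition-bounding-dimensions-Mb} (whose applicability is exactly where $b\in A_Z(G,\mu)$ enters), and conclude connectedness and density from the strict dimension drop of the non-admissible strata. The only bookkeeping to fix is that the clean torsor statement (\Cref{lemma-fibers-have-same-dim}) is for the map to $\Gr^\circ_{M,\mu}\times_{\Bun_M}\Bun_P^{\mathrm{d}^M_{\mu,b}}$ rather than to $\Bun_P$ alone, and the quantity you assign to $\dim_\ell(\Bun_P^\kappa)$ is really the dimension of that base; with those labels corrected the dimension count closes exactly as in the paper.
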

\begin{proof}
To prove that $\Gr^{(b_\mu,b)}_\mu$ is dense and connected, it suffices to prove that $\dim_\ell(\Gr^{(g,b)}_\mu)<d$ for all $g\in B(G)$ with $g\neq b_\mu$. 
We consider the Schubert cell $\Gr_{G,\mu}^\circ\subset \Gr_{G,\mu}$. 
Since $\dim_\ell(\Gr_{G,\mu}\setminus \Gr_{G,\mu}^\circ)<d$ it suffices to prove that $\dim_\ell(\Gr_{G,\mu}^{(g,b)}\cap \Gr_{G,\mu}^\circ)<d$.
If $b$ is basic, $\calB\calL_b:[\underline{G(\bbQ_p)}\backslash \Gr_{G,\mu}^\circ]\to \Bun_G$ is smooth of relative dimension $d$ \cite{FS21}. 
In particular, $\dim_\ell(\Gr_{G,\mu}^{\circ,(g,b)})=d+\dim_\ell(\Bun_G^g)$. 
Now, $b_\mu$ is the unique basic element in the image of $\calB\calL_b$ and for non-basic elements $\dim_\ell(\Bun_G^g)<0$. This finishes the proof in this case.

Suppose now that $b$ is not basic, let $M$ denote the centralizer of $\nu_b$, let $b_M$ denote the unique element in $B(M)$ mapping to $b$ whose Newton point is $G$-antidominant. 
Now, $\Bun_{P}^{b_M}\cong \Bun_G^b$ by our choice of $b_M$, and we let $\calE^P_b$ denote the unique $P$-reduction of $\calE_b$ determined by the image of $\Bun_{P}^{b_M}$ in $\Bun_P$.
The space of modifications of $\calE_b^P$ gets identified with $\Gr_P\subset \Gr_G$. 
We consider $\Gr_{P,\mu}^{\circ}:=L^+P \cdot \xi^\mu$, the result of intersecting $\Gr_{G,\mu}$ with the connected component of $\Gr_P$ attached to the dominant representative $\mu$.
We have a smooth map $\Gr^\circ_{P,\mu}\to \Gr^\circ_{M,\mu}$ of relative dimension $\langle 2\rho_G-2\rho_M, \mu\rangle$. 
Moreover, we have a commutative diagram: 
\begin{equation}
	\begin{tikzcd}
		\Gr^{\mathrm{d}_{\mu,b}^M}_{P,\mu}\ar{r} \ar{d}	& \Gr^\circ_{P,\mu} \ar{r} \ar{d} & \Gr^\circ_{G,\mu} \ar{d}{\calB\calL_b}  \\	
\Bun_{P}^{\mathrm{d}_{\mu,b}^M} \ar{r} \ar{d}	& 	\Bun_P \ar{d} \ar{r} & \Bun_G \\
	\Bun_M^{\mathrm{d}_{\mu,b}^M}\ar{r}	& 	\Bun_M  
	\end{tikzcd}
\end{equation}
Where $\Gr^{\mathrm{d}_{\mu,b}^M}_{P,\mu}$ is defined so that the square in the left-upper corner is Cartesian.
In particular, the upper left arrow is an open immersion. 
Since $\mathrm{d}^M_{\mu,b}\in B(M)$ is basic, we know that \begin{equation}\dim_\ell(\Gr^\circ_{P,\mu}\setminus \Gr^{\mathrm{d}_{\mu,b}^M}_{P,\mu})<d.\end{equation}
It suffices to prove that \begin{equation}\label{eqn.desig.2}\dim_\ell(\Gr^{\mathrm{d}_{\mu,b}^M}_{P,\mu}\cap \Gr_{G,\mu}^{(g,b)})<d\end{equation} for $g\neq b_\mu$.
By \Cref{proposition-bounding-dimensions-Mb},
\begin{equation}
	\dim_\ell(\Bun_{P}^{(\mathrm{d}_{\mu,b}^M,g)})< \langle 2\rho_G-2\rho_M, \nu_{\mathrm{d}_{\mu,b}^M}\rangle = \langle 2\rho_G-2\rho_M, \on{av}_M(\mu^\diamond-\nu_b)\rangle
\end{equation}
By \Cref{lemma-fibers-have-same-dim}, the geometric fibers of \begin{equation}\label{eq_mor_crucial}
\Gr^{\mathrm{d}_{\mu,b}^M}_{P,\mu}\to \Gr_{M,\mu}^\circ \times_{\Bun_M} \Bun_{P}^{\mathrm{d}_{\mu,b}^M}
\end{equation} have all dimension bounded by $\langle 2\rho_G-2\rho_M, \nu_b\rangle$. 
Consequently by \Cref{lemma-fibers-bound-dimension}, we get that \eqref{eqn.desig.2} holds. 
Indeed, $\dim_\ell(\Gr^{\mathrm{d}_{\mu,b}^M}_{P,\mu}\cap \Gr_{G,\mu}^{(g,b)})$ is bounded by the dimension of $\Gr_{M,\mu}^\circ \times_{\Bun_M} \Bun_{P}^{(\mathrm{d}_{\mu,b}^M,g)}$ and the dimension of the fiber. The former is smaller than $\langle2\rho_M,\mu\rangle + \langle 2\rho_G-2\rho_M, \on{av}_M(\mu^\diamond-\nu_b)\rangle$ and the later is $\langle 2\rho_G-2\rho_M, \nu_b\rangle$.
Moreover, 
$\langle 2\rho_G-2\rho_M, \on{av}_M(\mu^\diamond-\nu_b)\rangle=\langle 2\rho_G-2\rho_M, \mu^\diamond-\nu_b\rangle$ and $\langle 2\rho_G-2\rho_M, \mu^\diamond\rangle=\langle2\rho_G-2\rho_M, \mu\rangle$.
\end{proof}

\begin{lemma}
	\label{lemma-fibers-have-same-dim}
	 The geometric fibers of \eqref{eq_mor_crucial} are either $\on{Aut}^{\mathrm{unip}}_{\on{Fil}}(\calE_b)$-torsors or empty. Their dimension is $\langle 2\rho_G-2\rho_M,\nu_b\rangle$ in the former case.
\end{lemma}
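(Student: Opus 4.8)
The plan is to identify the morphism in \eqref{eq_mor_crucial} as a torsor under the group of filtered (unipotent) automorphisms of $\calE_b$ by unwinding the moduli descriptions of both sides via Beauville--Laszlo. Recall that $\Gr^{\mathrm{d}_{\mu,b}^M}_{P,\mu}$ parametrizes, over a perfectoid $S$, a $P$-bundle modification $(\calE^P, f^P \colon \calE^P \dashrightarrow \calE_b^P)$ of relative position $\leq \mu$ in the $\Gr_P^\circ$-component, together with the datum that the induced $M$-bundle lies in $\Bun_M^{\mathrm{d}_{\mu,b}^M}$; whereas the target $\Gr_{M,\mu}^\circ \times_{\Bun_M} \Bun_P^{\mathrm{d}_{\mu,b}^M}$ parametrizes the induced $M$-bundle modification $(\calE^M, f^M)$ of $\calE_b^M$ together with a $P$-bundle on $X_{\mathrm{FF},S}$ lying over $\calE^M$. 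So the morphism forgets the $P$-structure on the \emph{source} of the modification, remembering only its $M$-part, while keeping the $P$-bundle $\calE^P$ (the target of $\calB\calL$). The fiber over a point thus consists of the ways to lift a given $M$-modification $f^M$ to a $P$-modification $f^P$ compatible with fixed $\calE^P$ and $\calE_b^P$.

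**First I would** make precise that two such lifts differ by an automorphism of $\calE_b^P$ that induces the identity on $\calE_b^M$, i.e.\ an automorphism lying in the unipotent radical $U_P$ acting fiberwise, since the modification $f^P$ is determined away from the closed Cartier divisor and the two lifts agree there after passing to $M$. Concretely, given one lift $f^P$, any other is $f^P \circ u$ for $u \in \on{Aut}^{\mathrm{unip}}_{\on{Fil}}(\calE_b)$, the automorphisms of the $P$-reduction $\calE_b^P$ that are unipotent (land in $U_P$) — this uses that the $\Gr_P$ component is fixed and that the relative position is controlled, so no new $M$-part can appear. Conversely every such $u$ produces a valid lift, and the action is simply transitive on the (nonempty) fiber; when no lift exists the fiber is empty. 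This identifies the nonempty fibers with $\on{Aut}^{\mathrm{unip}}_{\on{Fil}}(\calE_b)$-torsors.

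**The computation of the dimension** then reduces to computing $\dim_\ell$ of the group diamond $\on{Aut}^{\mathrm{unip}}_{\on{Fil}}(\calE_b)$. Since $\calE_b^P$ is a $P$-reduction with $M$-part $\calE_b^M$ of Newton point $\nu_b$, which is $G$-antidominant (by the choice of $b_M$ in the proof of \Cref{thm-quasi-split}) hence $M$-dominant and $P$-positive, the Lie algebra $\Lie U_P$ with its induced bundle structure, twisted by $\calE_b^M$, has only \emph{positive} Harder--Narasimhan slopes, so $\on{Aut}^{\mathrm{unip}}_{\on{Fil}}(\calE_b)$ is representable by a successive extension of positive Banach--Colmez spaces along the lower central series of $U_P$. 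Its $\ell$-cohomological dimension equals the total degree of the vector bundle $\calE_b^M \times^M \Lie U_P$ on the Fargues--Fontaine curve, which is $\langle 2\rho_G - 2\rho_M, \nu_b\rangle$ by the standard weight count (each positive root $\alpha \in \Phi_G^+ \setminus \Phi_M^+$ contributes a line bundle of degree $\langle \alpha, \nu_b\rangle$, and $\sum_{\alpha} \langle \alpha, \nu_b\rangle = \langle 2\rho_G - 2\rho_M, \nu_b\rangle$).

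**The main obstacle** I anticipate is making the torsor structure rigorous at the level of v-stacks rather than just on points: one must check that the action of $\on{Aut}^{\mathrm{unip}}_{\on{Fil}}(\calE_b)$ is well-defined functorially in $S$ and that the fiber product presentation genuinely exhibits \eqref{eq_mor_crucial} as having this group as relative automorphisms — equivalently, that the only ambiguity in lifting an $M$-modification to a $P$-modification with prescribed source and target $P$-bundles is the unipotent filtered automorphism group, with nothing contributed by the reductive part (which is rigidified by fixing the component of $\Gr_P$ and the $\Bun_M^{\mathrm{d}_{\mu,b}^M}$-constraint). Once that is in place, the dimension count and the invocation of positivity of slopes (so that the Banach--Colmez spaces involved are of positive dimension and $\dim_\ell$ adds up along the filtration) are routine.
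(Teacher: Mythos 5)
Your overall strategy is the same as the paper's: identify the non-empty fibers of \eqref{eq_mor_crucial} as torsors under the global filtered unipotent automorphism group of $\calE_b$, then compute $\dim_\ell$ of that group. Your dimension count (a successive extension of positive Banach--Colmez spaces attached to $\calE_b^M \times^M \Lie U_P$, of total dimension equal to the degree $\langle 2\rho_G-2\rho_M,\nu_b\rangle$) is a correct unwinding of what the paper obtains by citing \cite[Proposition III.5.1]{FS21}, so that part is fine.

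There is, however, a genuine gap in the torsor identification, at precisely the step you flag as the ``main obstacle'' but then do not resolve. Unwinding Beauville--Laszlo as in \cite[Theorem 13.5.3]{SW20}, two points of $\Gr_P$ with the same image in $\Bun_P\times_{\Bun_M}\Gr_M$ differ by an element of $A^{-1}U(B_e)A$, i.e.\ by a \emph{meromorphic} unipotent automorphism of $\calE_b^P$ (defined only away from the Cartier divisor). The whole content of the lemma is that, when both points in addition lie in the cell $\Gr_{P,\mu}^\circ=L^+P\cdot\xi^\mu$, this element also lies in $U(B^+_\dR)$, hence in $U(B^+_\dR)\cap A^{-1}U(B_e)A=\on{Aut}^{\mathrm{unip}}_{\on{Fil}}(\calE_b)$, which is a group of \emph{global} automorphisms. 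Your justification --- ``the relative position is controlled, so no new $M$-part can appear'' --- explains why the difference is unipotent, but that already follows from the two lifts inducing the same $M$-modification; it does not address why the difference is integral, i.e.\ extends over the divisor. The paper proves integrality by writing each $t\in P(B_\dR)$ uniquely as $t=u_t m_t$ with $u_t\in U(B_\dR)$, $m_t\in M(B_\dR)$, and showing that $t\in P(B^+_\dR)\xi^\mu P(B^+_\dR)$ forces $u_t\in U(B^+_\dR)$; this rests on the inclusion $\xi^\mu U(B^+_\dR)\subseteq U(B^+_\dR)\xi^\mu$, which holds precisely because $\mu$ is taken dominant. The statement would fail for a non-dominant representative of the cell, so the dominance of $\mu$ must enter the argument explicitly; as written, your proposal never uses it.
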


\begin{proof}
    We begin by observing that the geometric fibers of the Beauville--Laszlo map $\Gr_{P} \to \Bun_P$ are torsors on the left for the group $A^{-1}P(B_e)A$ where $A \in P(B_\dR)$ is the Beauville--Laszlo glueing data for the $P$-torsor $\calE_b^P$ \cite[Theorem 13.5.3.(2)]{SW20}. 
    Similarly, the geometric fibers of $\Gr_M \to \Bun_M$ are $A^{-1}M(B_e)A$-torsors. We deduce that the non-empty geometric fibers of $\Gr_P \to \Bun_P \times_{\Bun_M} \Gr_M $ are torsors under the group $A^{-1}U(B_e)A$.
    

    Recall that every $t\in P(B_\dR)$ has a unique expression $t=u_t\cdot m_t$ with $u\in U(B_\dR)$ and $m\in M(B_\dR)$. We claim that if $t\in P(B_\dR^+)\xi^\mu P(B_\dR^+)$ then $u_t\in U(B_\dR^+)$. This follows from the normality of $U(B_\dR^+)$ in $P(B_\dR^+)$ and from the inclusion $\xi^\mu U(B_\dR^+)\subseteq U(B_\dR^+) \xi^\mu$, which follows from the fact that $\mu$ was assumed to be dominant. 
    Consequently, if $u\in U(B_\dR)$, $x\in \Gr_{P,\mu}^\circ$ are such that $u\cdot x\in \Gr_{P,\mu}^\circ$, then we conclude that necessarily $u\in U(B_\dR^+)$. 

    This implies that 
    the non-empty geometric fibers of our map \eqref{eq_mor_crucial} form a torsor under the group $U(B_\dR^+)\cap A^{-1}U(B_e)A=\on{Aut}^{\mathrm{unip}}_{\on{Fil}}(\calE_b)$. By \cite[Proposition III.5.1]{FS21} $\dim_\ell(\on{Aut}^{\mathrm{unip}}_{\on{Fil}}(\calE_b))=\langle 2\rho_G-2\rho_M,\nu_b\rangle$, and we may conclude the same about the non-empty fibers.
\end{proof}
	\bibliography{biblio.bib}
	\bibliographystyle{alpha}
	
\end{document}